\newtheorem{teo}{Theorem}[section]
\newtheorem{lem}[teo]{Lemma}
\newtheorem{prp}[teo]{Proposition}
\theoremstyle{definition}
\theoremstyle{remark}
\newtheorem{rem}[teo]{Remark}
\renewcommand{\l}{\lambda}
\newcommand{\s}{\sigma}
\newcommand{\PB}{\left\{\cdot\,,\cdot\right\}}
\newcommand{\Pb}[1]{\left\{\cdot\,,#1\right\}}
\newcommand{\pb}[1]{\left\{#1\right\}}
\newcommand{\lb}[1]{\[#1\]}
\renewcommand{\(}{\left(}
\renewcommand{\)}{\right)}
\renewcommand{\[}{\left[}
\renewcommand{\]}{\right]}
\newcommand{\set}[1]{\left\{#1\right\}}
\newcommand{\cB}{\mathcal B}
\newcommand{\cI}{\mathcal I}
\newcommand{\X}{\mathcal X}
\newcommand{\cS}{\mathcal S}
\newcommand{\bbC}{\mathbb C}
\newcommand{\bbN}{\mathbb N}
\newcommand{\bbR}{\mathbb R}
\newcommand{\bbZ}{\mathbb Z}
\newcommand{\bfone}{\mathbf 1}
\newcommand{\um}{{\underline m}}
\newcommand{\un}{{\underline n}}
\newcommand{\cycl}{\circlearrowleft}
\newcommand{\leqs}{\leqslant}
\newcommand{\geqs}{\geqslant}
\newcommand{\Id}{\mathop{\rm Id}}
\renewcommand{\geq}{\geqs}
\renewcommand{\leq}{\leqs}
\newif\ifprivate
 \numberwithin{equation}{section}
\def\???{\ifprivate {\bf {???}} \marginpar{{\Huge {\bf ?}}}\else \fi}
\numberwithin{equation}{section}
\begin{document}  

\nocite{*}

\parskip 4pt
\baselineskip 16pt

%%%%%%%%%%%%%%%%%%%%%%%%%%%%%%%%%
%%%%%%%%%%%%%%%%%%%%%%%%%%%%%%%%%

\title[Integrable deformations of some Lotka-Volterra systems]{Integrable deformations of the Bogoyavlenskij-Itoh Lotka-Volterra systems}

\author{C.\,A.~Evripidou}%
\address{%% Affiliation1
		Charalampos Evripidou,
		  Department of Mathematics and Statistics,
		  La Trobe University, Melbourne,
		  Victoria 3086, Australia}%
\email{C.Evripidou@latrobe.edu.au}

\author{P. Kassotakis}
\address{Pavlos Kassotakis, Department of Mathematics and Statistics,
University of Cyprus, P.O.~Box 20537, 1678 Nicosia, Cyprus}
\email{pavlos1978@gmail.com}

\author{P. Vanhaecke}
\address{ Pol Vanhaecke,  Laboratoire de Math\'ematiques\\
          UMR 7348 du CNRS\\
          Universit\'e de Poitiers\\
          86962 Futuroscope Chasseneuil Cedex\\
          France}
\email{ pol.vanhaecke@math.univ-poitiers.fr}

\thanks{Corresponding author: Pol Vanhaecke,  Email: pol.vanhaecke@math.univ-poitiers.fr}
\date{\today}

\begin{abstract}
  We construct a family of integrable deformations of the Bogoya\-vlenskij-Itoh systems and construct a Lax
  operator with spectral parameter for it. Our approach is based on the construction of a family of compatible
  Poisson structures for the undeformed system, whose Casimirs are shown to yield a generating function for the
  integrals in involution of the deformed systems. We show how these deformations are related to the Veselov-Shabat
  systems.
  %We show by constructing explicit commuting independent integrals that any deformation of the Bogoyavlenskij-Itoh
  %systems by constants which sum up to zero is Liouville integrable. We also give a Lax pair (with spectral
  %parameter) of these new systems.
\end{abstract}
\subjclass[2010]{37J35, 39A22}

\keywords{Integrable systems, deformations}

\maketitle

\tableofcontents

\section{Introduction} \label{intro}
The Bogoyavlenskij-Itoh systems were introduced by Bogoyavlenskij in \cite{Bog1} in his study of the integrability of
Lotka-Volterra systems. Recall that the most general form of Lotka-Volterra equations in dimension $n$ is
\begin{equation}\label{eq:LV_gen_intro}
  \dot x_i = \varepsilon_i x_i + \sum_{j=1}^n A_{i,j} x_i x_j, \ \ i=1,2, \dots , n \; .
\end{equation}
For the case of the Bogoyavlenskij-Itoh systems, $n=2k+1$ is odd, there are no linear terms ($\varepsilon_i=0$ for
all $i$) and the matrix $A$ is skew-symmetric with  entries
\begin{equation*}
    A_{i,j}=\left\{
    \begin{array}{rl}
       1\quad&i<j\leqslant\min\set{i+k,2k+1}\;,\\
      -1\quad&\min\set{i+k,2k+1}< j\leqslant 2k+1\;;
    \end{array}
    \right.
\end{equation*}%
see the matrix \eqref{eq:bogo_mat} below. It is a Hamiltonian system, with Poisson structure defined by
\begin{equation}\label{equ:intro_poisson}
  \pb{x_i,x_j}=A_{i,j}x_ix_j\;,
\end{equation}
and linear Hamiltonian $H=x_1+x_2+\dots+x_{2k+1}$. Bogoyavlenskij provides for this
system the following Lax equation (with spectral parameter~$\l$)
\begin{equation}\label{eq:bogo_lax_intro}
  (X+\lambda M)^\cdot=[X+\lambda M,B-\lambda M^{k+1}]
\end{equation}
where for $i,j\in \set{1,2,\dots,2k+1}$ the $(i,j)$-th entry of the matrices $X,\,M$ and $B$ is respectively given
by
\begin{equation*}%\label{eq:bogo_lax_entries}
  X_{i,j}:=\delta_{i,j+k}x_i\;,\quad M_{i,j}:=\delta_{i+1,j}\;,\quad
  B_{i,j}:=-\delta_{i,j}(x_i+x_{i+1}+\cdots+x_{i+k})\;.
\end{equation*}%
The characteristic polynomial of $X+\lambda M$ leads to $k+1$ independent constants of motion,
among which are both the Hamiltonian $H$ and the Casimir 
\begin{equation}\label{equ:intro_casimir}
  C:=\prod_{i=1}^{2k+1}x_i=x_1x_2\dots x_{2k+1}\;
\end{equation}%
of the Poisson structure \eqref{equ:intro_poisson}. Itoh gives in \cite{itoh1} a combinatorial description of these
integrals and proves in \cite{itoh2} by a combinatorial argument that they are in involution, thereby proving the
Liouville integrability of the Bogoyavlenskij-Itoh system. Some integrable reductions of the Bogoyavlenskij-Itoh
system were recently constructed and studied by us in \cite{PPPP}.

In this paper we construct and study a certain type of integrable deformations of the Bo\-go\-yav\-len\-skij-Itoh
systems. Our approach is based on (compatible) deformations of the Poisson structure \eqref{equ:intro_poisson} and
of its Casimir \eqref{equ:intro_casimir}. We show that the only constant Poisson structures
$$
  \pb{x_i,x_j}_b:=b_{i,j}\;,\qquad 1\leqslant i,j\leqs 2k+1\;,
$$
which are compatible with \eqref{equ:intro_poisson} are the ones for which $b_{i,j}=0$ for all $i$ and $j$ such
that $\vert i-j\vert\notin\set{k,k+1}$. If one writes the corresponding Hamiltonian vector field, with $H$ as
Hamiltonian, one finds
\begin{equation}\label{eq:LV_defo_intro}
  \dot x_i = \sum_{j=1}^{2k+1} A_{i,j} x_i x_j+c_i\;, \ \ i=1,2, \dots , 2k+1 \; ,
\end{equation}
where $c_i=b_{i,i+k}-b_{i-k,i}$. Notice that the $c_i$ sum up to zero, but this is the only relation between these
constants. Up to a minor change of variables (see Section \ref{sec:veselov}) this system coincides with the
so-called \emph{Veselov-Shabat system}, which was constructed in \cite{VS} as fixed point of compositions of
Darboux transformations of the Schr\"{o}dinger operator. Notice that this system is also known as (one of) the
\emph{Noumi-Yamada} system(s) (see \cite{NY}), but it seems that Noumi and Yamada were unaware of the
Veselov-Shabat system. Also, neither in the Noumi-Yamada paper \cite{NY} nor in the Veselov-Shabat paper \cite{VS}
are the systems that they consider put in relation with the Bogoyavlenskij-Itoh systems, of which they are
deformations.

We prove the Liouville integrability of \eqref{eq:LV_defo_intro} by using the deformed Casimir, i.e., the Casimir
of the deformed Poisson structure $\PB+\PB_b$. A combinatorial description of the deformed Casimir is given in
Proposition \ref{prop:deformed_casimir}. From it, one gets immediately a Casimir for the Poisson pencil
$\PB+\l\PB_b$ and so, using the Lenard-Magri scheme, a family of polynomials in involution, which we show to be
independent, thereby proving the Liouville integrability of the deformed Bogoyavlenskij-Itoh systems. We also
provide a Lax equation with spectral parameter for these systems, which is a deformation of the Lax equation
\eqref{eq:bogo_lax_intro}. Notice that our $(2k+1)\times(2k+1)$ Lax operator is different from the $2\times 2$
Lax operator which was constructed by Veselov and Shabat; our Lax equation has the advantage that the phase
variables appear in it linearly, which makes it easier to extract the phase variables (in order to know for example
their time evolution) from the Lax operator.

It can be shown that the deformed Bogoyavlenskij-Itoh systems admit a natural discretization, which is constructed
by using the deformed Lax operator which we constructed in this paper. This discretization will be worked out and
studied in a future publication.

The structure of the paper is as follows. We recall the main facts about the Bogoyavlenskij-Itoh systems in Section
\ref{sec:bogo}. The combinatorial constructions that we will use, in the style of Itoh's combinatorial
constructions of the (undeformed) first integrals, are prepared in Section \ref{sec:ell}. We proceed in Section
\ref{sec:poisson_defo} with the construction of the deformed Poisson strutures and deformed Casimirs. Section
\ref{sec:Lax} is devoted to the Lax equation and the Liouville integrability of the deformed Bogoyavlenskij-Itoh
systems; in particular we match the combinatorial description of the first integrals with the coefficients of our
Lax equation. The upshot is that \emph{as integrable systems} they are deformation of the Bogoyavlenskij-Itoh
systems. In the last section we show how these deformed systems are related to the ones constructed by Veselov and
Shabat in \cite{VS}.

%%%%%%%%%%%%%%%%%%%%%%%%%%%%%%%%%%%%%%%%%%%%%%%%%%%%%%%%%%%%%%%%%%%%%%%%%%%%%%%%
%%%%%%%%%%%%%%%%%%%%%%%%%%%%%%%%%%%%%%%%%%%%%%%%%%%%%%%%%%%%%%%%%%%%%%%%%%%%%%%%
%%%%%%%%%%%%%%%%%%%%%%%%%%%%%%%%%%%%%%%%%%%%%%%%%%%%%%%%%%%%%%%%%%%%%%%%%%%%%%%%

\section{The Bogoyavlenskij-Itoh systems and their integrability}\label{sec:bogo}
We recall in this section the basic results on the Bogoyavlenskij-Itoh systems, which were obtained in
\cite{Bog1,Bog2,itoh1,itoh2}; the notation is a slight simplification of the one used in \cite{PPPP}, where more
general systems are considered. Fix $k\in\bbN^*$ and denote $\cI:=\set{1,2,\dots,2k+1}$. We consider on
$\bbR^{2k+1}$ (or on $\bbC^{2k+1}$) with linear coordinates $x_1,\dots,x_{2k+1}$ the homogeneous quadratic Poisson
structure defined by
\begin{equation}\label{eq:quad_poisson}
  \pb{x_i,x_j}:=A_{i,j}x_ix_j\;,\qquad i,j\in \cI\;,
\end{equation}
where $A=(A_{i,j})$ is the constant skew-symmetric matrix
\begin{equation}\label{eq:bogo_mat}
  A=
  \begin{pmatrix}
    0&1&1&\cdots&1&-1&-1&\cdots&-1&-1\\
   -1&0&1&\cdots&1&1&-1&\cdots&-1&-1\\
   -1&-1&0&\cdots&1&1&1&\ddots&-1&-1\\
   \vdots&\vdots& & \ddots & \vdots & \vdots & \vdots & \ddots & \ddots & \vdots \\
   -1&-1&-1&\cdots&\cdots&\cdots&\cdots&\cdots&1&-1\\
    1&-1&-1&\cdots&\cdots&\cdots&\cdots&\cdots&1&1\\
    \vdots&\vdots& & \ddots & \vdots & \vdots & \vdots & \ddots & \vdots & \vdots  \\
    1&1&1&\cdots&-1&-1&-1&\cdots&0&1\\
    1&1&1&\cdots&1&-1&-1&\cdots&1&0
  \end{pmatrix},
\end{equation}
with $1$ and $-1$ appearing $k$ times on the first row (and hence on every row). When $k$ is not clear from the context, we
write $A^{(k)}$ for $A$ and $\PB^{(k)}$ for $\PB$. The rank of $A$ is constant and equals $2k$; a Casimir for $\PB$
is given by the polynomial function
\begin{equation*}
  C:=\prod_{i\in \cI}x_i=x_1x_2\dots x_{2k+1}\;.
\end{equation*}%
As Hamiltonian function, one takes $H:=\sum_{i\in \cI}x_i=x_1+x_2+\dots+x_{2k+1}$, the sum of all
coordinates. If we set $x_{2k+\ell+1}=x_\ell$ for all $\ell\in\bbZ$, then the Hamiltonian vector field
$\X_H:=\Pb{H}$ is given by
\begin{equation}\label{eq:BI_prelim}
  \dot x_i=x_i\sum_{j=1}^k(x_{i+j}-x_{i-j})\;,\qquad i\in \cI\;.
\end{equation}%
The automorphism of $\bbR^{2k+1}$ of order $2k+1$, given by
\begin{equation}\label{eq:auto_prelim}
  (x_1,x_2,\dots,x_{2k},x_{2k+1})\mapsto (x_2,x_3,\dots,x_{2k+1},x_1)\;,
\end{equation}%
is a Poisson map and it preserves the Hamiltonian, hence it is an automorphism of the system.  
The following Lax equation (with spectral parameter~$\l$) was provided by Bogoyavlenskij in
\cite{Bog1}:
\begin{equation}\label{eq:bogo_lax}
  (X+\lambda M)^\cdot=[X+\lambda M,B-\lambda M^{k+1}]
\end{equation}
where for $i,j\in \cI$ the $(i,j)$-th entry of the matrices $X,\,M$ and $B$ is respectively given by
\begin{equation}\label{eq:bogo_lax_entries}
  X_{i,j}:=\delta_{i,j+k}x_i\;,\quad M_{i,j}:=\delta_{i+1,j}\;,\quad
  B_{i,j}:=b_i\delta_{i,j}:=-\delta_{i,j}(x_i+x_{i+1}+\cdots+x_{i+k})\;.
\end{equation}%
%
%In the right hand side of these formulas, all indices are taken modulo $2k+1$ so that, for example, $M_{2k+1,1}=1$.
The characteristic polynomial of $X+\lambda M$ has the form
\begin{equation}\label{eq:bogo_char_poly}
  \det(X+\lambda M-\mu\Id)=\l^{2k+1}-\mu^{2k+1}+\sum_{\ell=0}^kK_\ell\l^{k-\ell}\mu^{k-\ell}\;,
\end{equation}%
where, by homogeneity, each $K_\ell$ is a homogeneous polynomial (in $x_1,\dots,x_{2k+1}$) of degree $2\ell+1$. One
has $K_0=H$, the Hamiltonian, and $K_k=C$, the above Casimir. Being a coefficient of the characteristic polynomial
of the Lax operator $X+\l M$, each one of the $K_\ell$ is a first integral of \eqref{eq:BI_prelim}.

In order to give an explicit formula for $K_\ell$, we need some more notation, which will also be needed in the
rest of the paper. With $k$ fixed as above, let $\ell$ be an integer with $0\leqs \ell\leqs k$. We will consider
besides $A=A^{(k)}$ also $A^{(\ell)}$. Let $\um=(m_1,m_2,\dots,m_{2\ell+1})$ be a $(2\ell+1)$-tuple of integers,
satisfying $1\leqs m_1<m_2<\cdots<m_{2\ell+1}\leqs 2k+1$. We view them as indices of the rows and columns of $A$:
we denote by $A'_{\um}$ the square submatrix of $A$ of size $2\ell+1$, corresponding to rows and columns
$m_1,m_2,\dots,m_{2\ell+1}$ of~$A$, so that
\begin{equation}\label{eq:B_def}
  (A'_{\um})_{i,j}=A_{m_i,m_j}\;, \quad\hbox{ for } i,j=1,\dots,2\ell+1\;.
\end{equation}
Let
\begin{equation}\label{eq:S_def}
  \cS_{\ell}:=\set{\um\mid A'_{\um}=A^{(\ell)}}\;.
\end{equation}
With this notation, the polynomials $K_\ell$ which appear in the characteristic polynomial
\eqref{eq:bogo_char_poly} can be written as
\begin{equation}\label{eq:k_i_itoh}
  K_\ell=\sum_{\um\in \cS_{\ell}} x_{m_1}x_{m_2}\dots x_{m_\ell}\dots x_{m_{2\ell+1}}\;.
\end{equation}%
For example, $\cS_{0}=\set{1,2,\dots,2k+1}$ and $\cS_{k}=\set{(1,2,\dots,2k+1)}$, so that $K_0=H$ and $K_k=C$, as
above. Moreover, Itoh shows by a beautiful combinatorial argument that the polynomials $K_\ell$ are in involution,
$\pb{K_\ell,K_m}=0$ for $0\leqs \ell<m \leqs k$. Since these $k+1$ polynomials are moreover functionally
independent, and since the rank of the Poisson structure $\PB$ is $2k$, the triplet
$(\bbR^{2k+1},\PB,(K_0,K_1,\dots,K_k))$ is a Liouville integrable system.

%%%%%%%%%%%%%%%%%%%%%%%%%%%%%%%%%%%%%%%%%%%%%%%%%%%%%%%%%%%%%%%%%%%%%%%%%%%%%%%%
%%%%%%%%%%%%%%%%%%%%%%%%%%%%%%%%%%%%%%%%%%%%%%%%%%%%%%%%%%%%%%%%%%%%%%%%%%%%%%%%
%%%%%%%%%%%%%%%%%%%%%%%%%%%%%%%%%%%%%%%%%%%%%%%%%%%%%%%%%%%%%%%%%%%%%%%%%%%%%%%%

\section{The sets $\cS_{\ell}$}\label{sec:ell}
We establish in this section some combinatorial properties of the sets $\cS_{\ell}$ which will be used in the
subsequent sections. We recall that $k$ is a fixed integer and that $\cI$ stands for $\set{1,2,\dots,2k+1}$. We
also denote for $s \in\cI$ by $\cI^{(s)}$ the set of all strictly ordered $s$-tuplets $\um=(m_1,m_2,\dots,m_s)$,
with entries in $\cI$. We will often use set theory notation for such elements, for example we write $i\in\um$ and
$\set{i,j}\subset\um$, with the obvious meanings.  If $\um$ and $\un$ are two vectors with elements in $\cI$,
satisfying $\um\cap\un=\emptyset$, we write $\um\oplus\un$ for the vector which contains the elements of $\um$ and
$\un$ (in increasing order); also, if $\un\subset\um$, we write $\um\ominus\un$ for the vector with elements in
$\um$ and not in $\un$.  By a slight abuse of notation, we will denote for $r\in\bbZ$ by $r\mod 2k+1$ the unique
element of~$\cI$ which is congruent to $r$ modulo $2k+1$. For $0\leqs \ell\leqs k$ the set $\cS_{\ell}$ is given
by
\begin{equation*}
  \cS_{\ell}=\set{\um\in\cI^{(2\ell+1)}\mid A_{m_i,m_j}=A^{(\ell)}_{i,j}\;,\hbox{ for } i,j=1,\dots,2\ell+1}\;.
\end{equation*}
We recall from \cite{PPPP} the following characterization of the elements of $\cS_{\ell}$:
\begin{prp}\label{lma:S}
  Let $\um=(m_1,\dots,m_{2\ell+1})$ be an element of $\cI^{(2\ell+1)}$. Then $\um \in \cS_{\ell}$ if and only if
  the following conditions are satisfied:
  \begin{enumerate}
    \item[(1)] $m_{\ell+i}<m_i+k+1\leqs m_{\ell+i+1}$ for $i=1,\dots,\ell$;
    \item[(2)] $m_{2\ell+1}<m_{\ell+1}+k+1$.
  \end{enumerate}
\end{prp}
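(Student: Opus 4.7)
The plan is to translate membership in $\cS_\ell$ into inequalities on the differences $m_j-m_i$ and match them with conditions~(1) and~(2). For $1\leqs i<j\leqs 2\ell+1$ one has $A^{(\ell)}_{i,j}=+1$ precisely when $j-i\leqs\ell$, and, symmetrically, since $1\leqs m_i<m_j\leqs 2k+1$, one has $A^{(k)}_{m_i,m_j}=+1$ precisely when $m_j-m_i\leqs k$. Membership $\um\in\cS_\ell$ is therefore equivalent to the biconditional
\begin{equation*}
  j-i\leqs\ell \iff m_j-m_i\leqs k,\qquad \text{for every } 1\leqs i<j\leqs 2\ell+1.
\end{equation*}

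For the necessity of~(1) and~(2), I would specialize this biconditional at the two critical distances. Taking $j=i+\ell$ forces $m_{i+\ell}\leqs m_i+k$ for $i=1,\dots,\ell+1$: the case $i=\ell+1$ is condition~(2), and the cases $i=1,\dots,\ell$ give the first inequality in~(1). Taking $j=i+\ell+1$ forces $m_{i+\ell+1}\geqs m_i+k+1$ for $i=1,\dots,\ell$, which is the second inequality in~(1).

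For sufficiency, I would bootstrap~(1) and~(2) to all pairs using strict monotonicity of $\um$. Assume~(1) and~(2) hold. If $j-i\leqs\ell$ and $i\leqs\ell+1$, then $m_j\leqs m_{i+\ell}\leqs m_i+k$ via the first half of~(1) or via~(2); if instead $i\geqs\ell+2$, then $j-\ell\geqs 2$ and the same inequality applied at the shifted index $j-\ell$ (which lies in $\set{1,\dots,\ell+1}$) yields $m_j-m_i\leqs m_j-m_{j-\ell}\leqs k$. If $j-i\geqs\ell+1$, the boundary case $j=i+\ell+1$ is the second half of~(1) directly, and for $j\geqs i+\ell+2$ one necessarily has $i\leqs\ell$ (else $j>2\ell+1$), so $m_j>m_{i+\ell+1}\geqs m_i+k+1$ by~(1) and strict monotonicity.

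The only real obstacle is bookkeeping, namely tracking which subcase of~(1) or~(2) applies in each range of $(i,j)$ and checking that the required index bounds always align. No deeper combinatorial argument is needed beyond the elementary biconditional above.
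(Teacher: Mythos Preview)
Your argument is correct. The reduction of $\um\in\cS_\ell$ to the biconditional $j-i\leqs\ell\iff m_j-m_i\leqs k$ for all $1\leqs i<j\leqs 2\ell+1$ is exactly right, and your necessity and sufficiency arguments go through; the case splits in the sufficiency direction are properly handled by the index bounds you record (in particular, in the subcase $i\geqs\ell+2$ with $j-i\leqs\ell$, the shifted index $j-\ell$ indeed lands in $\set{1,\dots,\ell+1}$ because $j\leqs 2\ell+1$, and $i\geqs j-\ell$ follows from $j-i\leqs\ell$, so the monotonicity estimate is valid).

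There is nothing to compare against in this paper: the proposition is stated here without proof, being recalled from the authors' earlier work \cite{PPPP}. Your self-contained argument is the natural one and is essentially what one finds there.
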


There are for any $s\in\cI$ two natural permutations of $\cI^{(s)}$: first, there is an involution $\s_s$, given by
\begin{equation*}
  \s_s(m_1,m_2,\dots,m_{s}):=(2k+2-m_{s},2k+2-m_{s-1},\dots,2k+2-m_{1})\;.
\end{equation*}%
If we write $\un:=\s_s(\um)$, then $n_i=2k+2-m_{s+1-i}$, for $i=1,2,\dots,s$. Next, there is a cyclic
permutation of order $2k+1$,
\begin{equation}\label{eq:tau_def}
  \tau_s(m_1,m_2,\dots,m_{s}):=
  \left\{\begin{array}{ll}
    (m_1+1,m_2+1,\dots,m_{s}+1)&\hbox{when } m_{s}<2k+1\;;\\
    (1,m_1+1,m_2+1,\dots,m_{s-1}+1)&\hbox{when } m_{s}=2k+1\;.
  \end{array}
  \right.
\end{equation}%
Said differently, $\tau_{s}$ simply adds 1 to all entries of the vector $\um$, but the result needs to be slightly
reordered when one of the entries of $\um$ gets bigger than $2k+1$. We show in following lemma that $\s_{2\ell+1}$
and $\tau_{2\ell+1}$ both restrict to a permutation of $\cS_{\ell}$.
\begin{lem}\label{lma:tau}
  If $\um\in\cS_{\ell}$ then $\s_{2\ell+1}(\um)\in\cS_{\ell}$ and $\tau_{2\ell+1}(\um)\in\cS_{\ell}$.  
\end{lem}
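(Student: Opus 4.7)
The plan is to reduce the statement to the characterization of $\cS_\ell$ given in Proposition~\ref{lma:S}: $\um\in\cS_\ell$ iff
\[
m_{\ell+i}<m_i+k+1\leqs m_{\ell+i+1}\quad(i=1,\dots,\ell)\quad\text{and}\quad m_{2\ell+1}<m_{\ell+1}+k+1.
\]
So I only need to verify that both candidate tuples $\s_{2\ell+1}(\um)$ and $\tau_{2\ell+1}(\um)$ satisfy these two conditions. In each case the verification is done by a direct substitution followed by a reindexing; the work is mostly bookkeeping, and the only real subtlety is matching the ``boundary'' inequalities, i.e.\ the interaction between condition~(1) at the extreme values of $i$ and condition~(2).

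For $\s_{2\ell+1}$, I would set $\un:=\s_{2\ell+1}(\um)$, so $n_i=2k+2-m_{2\ell+2-i}$, and translate each inequality of condition~(1) for $\un$ into an inequality on the $m$'s. Writing $j=\ell+1-i$, condition~(1) for $\un$ splits into the two families $m_j+k+1\leqs m_{j+\ell+1}$ and $m_{j+\ell+1}<m_{j+1}+k+1$ for $j=1,\dots,\ell$. The first family is exactly the right inequality in condition~(1) of $\um$; the second family coincides for $j=1,\dots,\ell-1$ with the left inequality in condition~(1) of $\um$, while for $j=\ell$ it is precisely condition~(2) of $\um$. Finally condition~(2) for $\un$ reduces, after clearing $2k+2$, to $m_{\ell+1}<m_1+k+1$, which is condition~(1) of $\um$ with $i=1$. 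Thus $\s_{2\ell+1}(\um)\in\cS_\ell$.

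For $\tau_{2\ell+1}$ I would split into the two cases of the definition \eqref{eq:tau_def}. If $m_{2\ell+1}<2k+1$, then $n_i=m_i+1$ for all $i$; both families of inequalities are then invariant under adding $1$ to every $m_i$, so the conditions for $\un$ are literally the conditions for $\um$. If $m_{2\ell+1}=2k+1$, then $n_1=1$ and $n_i=m_{i-1}+1$ for $i\geqs 2$. Condition~(1) for $\un$ with $i\geqs 2$ reindexes (via $j=i-1$) to $m_{\ell+j}<m_j+k+1\leqs m_{\ell+j+1}$ for $j=1,\dots,\ell-1$, which is part of condition~(1) of $\um$; condition~(2) for $\un$ rewrites as $m_{2\ell}<m_\ell+k+1$, which is the left inequality in condition~(1) of $\um$ with $i=\ell$. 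The delicate case is $i=1$ in condition~(1) for $\un$: one needs $m_\ell<k+1\leqs m_{\ell+1}$. Here the left half follows from the right inequality in condition~(1) of $\um$ at $i=\ell$ combined with $m_{2\ell+1}=2k+1$, and the right half follows from condition~(2) of $\um$ again using $m_{2\ell+1}=2k+1$. This is the only place where $m_{2\ell+1}=2k+1$ is genuinely used, and it is the step I expect to be the main obstacle in the sense of requiring both conditions~(1) and~(2) of the original tuple to meet at the boundary of the shifted one.
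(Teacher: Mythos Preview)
Your proof is correct and follows essentially the same approach as the paper's: both reduce to the characterization of Proposition~\ref{lma:S} and verify the inequalities by direct substitution and reindexing, with the case split for $\tau_{2\ell+1}$ and the identification of $m_\ell<k+1\leqs m_{\ell+1}$ as the only nontrivial step (derived exactly as you do, from condition~(1) at $i=\ell$ and condition~(2) together with $m_{2\ell+1}=2k+1$). The only cosmetic difference is that the paper merges condition~(2) for $\un$ into condition~(1) by letting $i$ run up to $\ell+1$ and then uses two separate substitutions $j=\ell+2-i$ and $j=\ell+1-i$, whereas you keep (1) and (2) separate and use the single substitution $j=\ell+1-i$; your bookkeeping is arguably a bit cleaner.
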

\begin{proof}
Suppose that $\um\in\cS_{\ell}$ and let $\un:=\s_{2\ell+1}(\um)$. According to Proposition \ref{lma:S} we need to show that
\begin{equation}\label{eq:tau_ineqs}
  1\leqs i\leqs \ell+1\implies n_{\ell+i}<n_i+k+1\hbox{ and } 1\leqs i\leqs\ell\implies n_i+k+1\leqs n_{\ell+i+1}\;.
\end{equation}
In terms of $\um$ these two inequalities become $m_{2\ell+2-i}<m_{\ell+2-i}+k+1$ and $m_{\ell+1-i}+k+1\leqs
m_{2\ell+2-i}$.  Setting $j:=\ell+2-i$ in the first inequality and $j:=\ell+1-i$ in the second inequality,
\eqref{eq:tau_ineqs} becomes
\begin{equation*}
  1\leqs j\leqs \ell+1\implies m_{\ell+j}<m_j+k+1\hbox{ and } 1\leqs j\leqs\ell\implies m_j+k+1\leqs m_{\ell+j+1}\;,
\end{equation*}%
which are exactly the conditions (1) and (2) in Proposition \ref{lma:S} which express that $\um\in\cS_{\ell}$. This
shows that $\s_{2\ell+1}(\um)\in\cS_{\ell}$. Next, let $\un:=\tau_{2\ell+1}(\um)$. Again, we need to verify
\eqref{eq:tau_ineqs}: when $m_{2\ell+1}<2k+1$ this is completely obvious, so let us assume that
$m_{2\ell+1}=2k+1$. We have that $n_1=1$ and $n_i=m_{i-1}+1$ for $i=2,3,\dots,2\ell+1$. We need to prove
\eqref{eq:tau_ineqs} for this vector $\un$.  Again, for $i\geqs2$ this is completely obvious, so we only need to
check that $m_\ell<k+1\leqs m_{\ell+1}$. Both follow from the characterizations of $\um$ in Proposition
\ref{lma:S}: the first one from the second inequality in (1), with $i=\ell$, and the second one from~(2), with
$m_{2\ell+1}=2k+1$.
\end{proof}

We decompose $\cS_{\ell}$ in two subsets $\cS_{\ell,+}$ and $\cS_{\ell,-}$, where
\begin{equation}%
\label{eq:decomp_of_S_i}
  \cS_{\ell,+}:=\set{\um\in\cS_{\ell}\mid 1\in\um}\;,\quad   \cS_{\ell,-}:=\set{\um\in\cS_{\ell}\mid 1\notin\um}\;
\end{equation}%
and we define the maps
\begin{equation}\label{eq:phi}
  \begin{array}{cclclccl}
    \phi_1&:&\set{\um\in\cS_{\ell-1,-}\mid m_\ell\leq k+1}&\rightarrow&\cS_{\ell},&\um&\mapsto&\um\oplus(1,k+2)\;,\\
    \phi_2&:&\set{\um\in\cS_{\ell-1,-}\mid m_\ell\geq k+2}&\rightarrow&\cS_{\ell},&\um&\mapsto&\um\oplus(1,k+1)\;.
  \end{array}  
\end{equation}
\begin{lem}\label{lma:phi_maps}
  The formulas \eqref{eq:phi} define injective maps with values in $\cS_{\ell,+}$. More precisely, they are part of
  the following commutative diagram, where all (restriced) maps are bijections.
  \begin{equation*}\label{eq:phi_diag}
  \xymatrix{
  \set{\um\in\cS_{\ell-1,-}\mid m_\ell\leq k+1}
  \ar[rr]^-{\phi_1}
  \ar[d]_-{\tau_{\ell-1}\circ\s_{\ell-1}}& &
  \set{\un\in\cS_{\ell,+}\mid k+2\in\un}\ar[d]^-{\tau_\ell\circ\s_{\ell}}\\
  \set{\um\in\cS_{\ell-1,-}\mid m_\ell\geq k+2}
  \ar[rr]^-{\phi_2}& &
  \set{\un\in\cS_{\ell,+}\mid k+1\in\un}}
  \end{equation*}
\end{lem}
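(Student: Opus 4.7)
The plan is to reduce everything to the following key observation about position rigidity: for any $\un\in\cS_{\ell,+}$, applying the inequalities of Proposition \ref{lma:S} with $n_1=1$ yields $n_{\ell+1}<k+2\leq n_{\ell+2}$. Hence $k+1\in\un$ if and only if $n_{\ell+1}=k+1$, and $k+2\in\un$ if and only if $n_{\ell+2}=k+2$. This single fact will drive both the verification that $\phi_1,\phi_2$ are well-defined and the description of their images.

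Next I would check well-definedness of $\phi_1$ (and symmetrically $\phi_2$). For $\um\in\cS_{\ell-1,-}$ with $m_\ell\leq k+1$, condition (1) of Proposition \ref{lma:S} applied with $i=1$ gives $m_{\ell+1}\geq m_1+k+1\geq k+3$, so $k+2\notin\um$, and the inserted $k+2$ lands in position $\ell+2$. The required inequalities defining $\cS_\ell$ for the resulting tuple $\un$ then reduce, index by index, to the inequalities for $\um$ (the case $i=1$ uses $m_\ell\leq k+1$ and $m_1\geq 2$; the cases $i\geq 2$ are a direct shift). For $\phi_2$ the analogous argument uses condition (1) with $i=\ell-1$: from $m_{\ell-1}+k+1\leq m_{2\ell-1}\leq 2k+1$ one gets $m_{\ell-1}\leq k$, hence $k+1\notin\um$ and $k+1$ lands in position $\ell+1$. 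Injectivity of both maps is immediate (just delete the two inserted entries), and surjectivity onto $\{\un\in\cS_{\ell,+}\mid k+2\in\un\}$ (resp.\ $\{k+1\in\un\}$) follows from the key observation: in each case the positions of the inserted elements in $\un$ are forced, so removing them recovers an $\um$ in the correct source set.

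For commutativity I would just compute both routes starting from $\um\in\cS_{\ell-1,-}$ with $m_\ell\leq k+1$. Since $1\notin\um$ one has $\tau_{2\ell-1}\s_{2\ell-1}(\um)=(2k+3-m_{2\ell-1},\dots,2k+3-m_1)$; its middle entry equals $2k+3-m_\ell\geq k+2$, so $\phi_2$ applies and inserts $1$ and $k+1$. On the other hand $\phi_1(\um)$ contains $n_1=1$ and $n_{\ell+2}=k+2$; applying $\s_{2\ell+1}$ turns $k+2$ into $k$ and makes $2k+1$ the last entry, after which $\tau_{2\ell+1}$ promotes it to a leading $1$ and turns the former $k$ into $k+1$. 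Both tuples are then equal to
\[
\bigl(1,\,2k+3-m_{2\ell-1},\dots,2k+3-m_{\ell+1},\,k+1,\,2k+3-m_\ell,\dots,2k+3-m_1\bigr).
\]
Finally, the vertical maps are bijections: by Lemma \ref{lma:tau} both $\s$ and $\tau$ preserve $\cS_{\ell-1}$ and $\cS_\ell$, and the formula above shows $\tau\s$ replaces $m_i$ with $2k+3-m_{2\ell-i}$, which swaps the conditions $m_\ell\leq k+1$ and $m_\ell\geq k+2$ (and on the right side swaps $n_{\ell+2}=k+2$ with $n_{\ell+1}=k+1$). The inverse is $\s\tau^{-1}$, whose effect is described by the same palindromic formula, giving the required bijections. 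The main obstacle is purely bookkeeping: keeping track of how indices shift under the insertion of two new entries in specific positions; the position-rigidity observation at the start is what prevents this from becoming a case analysis.
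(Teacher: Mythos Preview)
Your proof is correct and follows essentially the same route as the paper's: both arguments verify well-definedness of $\phi_1,\phi_2$ by reducing the $\cS_\ell$ inequalities for $\un$ to the $\cS_{\ell-1}$ inequalities for $\um$, construct explicit inverses by deleting the inserted entries, and establish the vertical bijections and commutativity via the explicit formula $\tau\s(\um)_i=2k+3-m_{2\ell-i}$ on $\cS_{\ell-1,-}$ (and its analogue on $\cS_{\ell,+}$). Your ``position rigidity'' observation---that $n_1=1$ forces $n_{\ell+1}<k+2\leq n_{\ell+2}$---is exactly the computation the paper performs when checking that $\phi_1^{-1},\phi_2^{-1}$ land in the correct source sets, just isolated upfront and reused; this is a cleaner organization but not a different method.
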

\begin{proof}
We first show that $\phi_1$ is well-defined. Let $\ell\geqs1$ and suppose that $\um\in\cS_{\ell-1,-}$ with
$m_\ell\leq k+1$. We need to show that $\um\cap(1,k+2)=\emptyset$ and that $\un:=\phi_1(\um)=\um\oplus(1,k+2)$
belongs to $\cS_{\ell}$. According to Proposition \ref{lma:S},
\begin{eqnarray}
  &&m_{\ell-1+i}<m_i+k+1\leqs m_{\ell+i} \quad\hbox{ for }\quad i=1,\dots,\ell-1\;;\label{eq:ms}\\
  &&m_{2\ell-1}<m_{\ell}+k+1\;,\label{eq:mi}
\end{eqnarray}
and, by assumption,
\begin{equation}\label{eq:assum}
  m_1>1\qquad\hbox{and} \qquad m_\ell\leqs k+1\;.
\end{equation}%
According to \eqref{eq:ms} with $s=1$ and \eqref{eq:assum}, $m_{\ell+1}>k+2>m_\ell$, which shows that
$\um\cap(1,k+2)=\emptyset$. Since $m_{\ell+1}>k+2$ and $m_\ell\leqs k+1$,
\begin{equation}\label{eq:m_to_n}
  n_j=\left\{
  \begin{array}{cl}
    1&j=1\;,\\
    m_{j-1}&j=2,\dots,\ell+1\;,\\
    k+2&j=\ell+2\;,\\
    m_{j-2}&j=\ell+3,\dots,2\ell+1\;.
  \end{array}
  \right.
\end{equation}%
We need to check that $\un\in\cS_{\ell}$, i.e., that
\begin{eqnarray}
  &&n_{\ell+i}<n_i+k+1\leqs n_{\ell+i+1} \quad\hbox{ for }\quad i=1,\dots,\ell\;;\label{eq:ns}\\
  &&n_{2\ell+1}<n_{\ell+1}+k+1\;.\label{eq:ni}
\end{eqnarray}
To do this, we use \eqref{eq:m_to_n} to write the latter inequalities in terms of the entries of $\um$: for $i=1$,
$i=2$ and $i=3,\dots,\ell$, \eqref{eq:ns} reads respectively
\begin{eqnarray*}
  &&m_\ell<k+2\leqs k+2\;,\\
  &&k+2<m_1+k+1\leqs m_{\ell+1}\;,\\
  &&m_{\ell+i-2}<m_{i-1}+k+1\leqs m_{\ell+i-1}\;.
\end{eqnarray*}
The first line follows from $m_\ell\leqs k+1$ (see \eqref{eq:assum}), the other two from $m_1>1$
and~\eqref{eq:ms}. Finally, \eqref{eq:ni}, written in terms of $\um$ is precisely \eqref{eq:mi}. This shows that
$\un\in\cS_{\ell}$ and hence that $\phi_1$ takes values in $\cS_{\ell,+}$, more precisely in the set
$\left\{\un\in\cS_{\ell,+}\mid(1,k+2)\in\un\right\}$. Obviously, $\phi_1$ is injective. It is proven similarly that
$\phi_2$ takes values in $\cS_{\ell,+}$ and is injective; let us just point out that the formula for
$\un:=\phi_2(\um)$ (as in \eqref{eq:m_to_n}) is now given by
\begin{equation}\label{eq:m_to_n_2}
  n_j=\left\{
  \begin{array}{cl}
    1&j=1\;,\\
    m_{j-1}&j=2,\dots,\ell\;,\\
    k+1&j=\ell+1\;,\\
    m_{j-2}&j=\ell+2,\dots,2\ell+1\;.
  \end{array}
  \right.
\end{equation}%
In order to show that the (restricted) horizontal maps $\phi_1$ and $\phi_2$ in the diagram are bijections, it
suffices to construct their inverse maps, which are defined as restrictions of the following two maps:
\begin{equation*}
  \begin{array}{cclclccl}
    \phi_1^{-1}&:&\set{\um\in\cS_{\ell,+}\mid k+2\in\um}&\rightarrow&\cS_{\ell},&\um&\mapsto&\um\ominus(1,k+2)\;,\\
    \phi_2^{-1}&:&\set{\um\in\cS_{\ell,+}\mid k+1\in\um}&\rightarrow&\cS_{\ell},&\um&\mapsto&\um\ominus(1,k+1)\;.
  \end{array}  
\end{equation*}
Clearly, these maps are well-defined. We need to show that if $\un\in\cS_{\ell}$ with $(1,k+2)\subset\un$ then
$\um:=\un\ominus(1,k+2)$ satisfies $m_\ell\leqs k+1$. Since $\un\in\cS_{\ell}$ it satisfies \eqref{eq:ns}, which
yields with $i=1$ and $n_1=1$ that $n_{\ell+1}<k+2\leqs~n_{\ell+2}$ so that $n_{\ell+2}=k+2$ and
$m_\ell=n_{\ell+1}\leqs k+1$.  Similarly, if $\un\in\cS_{\ell}$ with $(1,k+1)\subset\un$ then
$\um:=\un\ominus(1,k+1)$ satisfies $m_\ell>k+1$. This shows that the horizontal arrows in the diagram are
bijections.

Let us show that the vertical arrows of the diagram are also bijections. In view of Lemma \ref{lma:phi_maps},
$\tau_{\ell-1}\circ\s_{\ell-1}$ is a permutation (involution) of $\cS_{\ell-1}$; explicitly, it is given for
$\um\in\cS_{\ell-1,-}$ by
\begin{equation}
  \tau_{\ell-1}\circ\s_{\ell-1}(\um)=(2k+3-m_{2\ell-1},2k+3-m_{2\ell-2},\dots,2k+3-m_{1})\;.
\end{equation}
The formula shows that $1\notin \tau_{\ell-1}\circ\s_{\ell-1}(\um)$ so that $\tau_{\ell-1}\circ\s_{\ell-1}$
restricts to a bijection of $\cS_{\ell-1,-}$, and that the $\ell$-th component of
$\tau_{\ell-1}\circ\s_{\ell-1}(\um)$ is given by $2k+3-m_\ell$, showing that $\tau_{\ell-1}\circ\s_{\ell-1}$
restricts further to a bijection between the subsets $\set{\um\in\cS_{\ell-1,-}\mid m_\ell\leq k+1}$ and
$\set{\um\in\cS_{\ell-1,-}\mid m_\ell\geq k+2}$ of $\cS_{\ell,-}$. Similarly, $\tau_{\ell}\circ\s_{\ell}$ is an
involution of $\cS_{\ell}$, which is given, for $\un\in\cS_{\ell,+}$ by
\begin{equation}\label{eq:tau_sigma}
  \tau_{\ell}\circ\s_{\ell}(\un)=(1,2k+3-n_{2\ell+1},2k+3-n_{2i},\dots,2k+3-n_{3},2k+3-n_{2})\;.
\end{equation}
Also, it restricts to a bijection between the subsets $\set{\un\in\cS_{\ell,+}\mid (1,k+2)\subset\un}$ and
$\left \{\un\in\cS_{\ell,+}\mid \right.$ $\left. (1,k+1)\subset\un\right\}$ of $\cS_{\ell,+}$: if $(1,k+2)\subset\un$
(resp.\ $(1,k+1)\subset\un$), then $n_{\ell+2}=k+2$ (resp.\ $n_{\ell+1}=k+1)$, so that $\tau_\ell(\s_\ell(\un))\ni
k+1$ (resp.\ $\tau_\ell(\s_i(\un))\ni k+2$). The commutativity of the diagram follows at once from the explicit
formulas \eqref{eq:m_to_n} and \eqref{eq:m_to_n_2}~--~\eqref{eq:tau_sigma}.
\end{proof}

For an element $\um\in\cI^{(s)}$, we denote by $\um'$ the vector whose elements are those elements of $\cI$
which are absent from $\um$ (again these elements will always be put in the increasing order). In formula,
$\um'=\cI\ominus\um$. We also denote, for $0\leqs\ell\leqs k$,
\begin{equation*}
  \cS'_\ell:=\set{\um'\mid\um\in\cS_{\ell}}\;.
\end{equation*}%
Clearly, if $\um'\in\cS_{\ell}'$ then $\um'$ has $2(k-\ell)$ entries. We give in the following proposition a
characterization of the elements of $\cS'_\ell$; it will be used in the construction of first integrals in the following
sections.
\begin{prp}\label{prp:m'}
  Let $0\leqs\ell\leqs k$ and let
  $\um'=(r_1,r_2,\dots,r_{k-{\ell}},s_1,s_2,\dots,s_{k-{\ell}})\in\cI^{(2k-2\ell)}$. Then $\um'\in\cS_{\ell}'$ if
  and only if
  \begin{equation*}
    s_j-r_j\in\set{k,k+1}\;,\quad\hbox{for }j=1,\dots,k-{\ell}\;.
  \end{equation*}%
\end{prp}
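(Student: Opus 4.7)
The plan is to prove Proposition~\ref{prp:m'} by induction on $\ell$, handling both implications simultaneously.

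For the base case $\ell=0$, a direct three-case split on whether the unique element $m$ of $\um$ satisfies $m\leqs k$, $m=k+1$, or $m\geqs k+2$ shows that the complement of $(m)$ in $\cI$ pairs up with the required differences, so only the forward direction is non-vacuous at this level. For the inductive step, I would first verify that both conditions are invariant under the cyclic permutation $\tau$ of $\cI$. For the side $\um\in\cS_\ell$ this is Lemma~\ref{lma:tau}; for the pairing condition, the only delicate point is the wrap-around case $s_{k-\ell}=2k+1$, in which $\tau$ brings a $1$ to the front and creates a new first pair $(1,r_{k-\ell}+1)$ whose difference $r_{k-\ell}$ automatically lies in $\set{k,k+1}$, again by the original pairing. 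This lets me assume $1\in\um$ from here on.

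For the forward direction, Lemma~\ref{lma:phi_maps} writes $\um\in\cS_{\ell,+}$ as $\phi_1(\un)$ or $\phi_2(\un)$ with $\un\in\cS_{\ell-1,-}$ satisfying $n_\ell\leqs k+1$ or $n_\ell\geqs k+2$, respectively. The induction hypothesis supplies the pairing for $\un'$, and since $\um'=\un'\setminus\set{1,k+2}$ or $\un'\setminus\set{1,k+1}$, the task reduces to showing that this deletion removes precisely the first pair of $\un'$. This is a cardinality step: the bound on $n_\ell$ determines $|\un\cap\set{1,\ldots,k+1}|$, and through the complementary identity together with the pairing structure of $\un'$, forces the smallest second-half entry of $\un'$ to equal $k+2$ or $k+1$, respectively.

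For the backward direction, starting from $\um'$ with pairing and $1\in\um$, I would first exclude the possibility $\set{k+1,k+2}\subset\um'$: otherwise both entries would necessarily be consecutive $r_j$'s (since $s_j\geqs r_j+k\geqs k+2$ prevents either from being an $s$-entry), and their two partner $s$-entries would then be forced to exceed $2k+1$. Hence some $x\in\set{k+1,k+2}$ belongs to $\um$; setting $\un:=\um\setminus\set{1,x}$, inserting the pair $(1,x)$ into $\um'$ produces the pairing at level $\ell-1$ (its difference $x-1\in\set{k,k+1}$ slots in as the new first pair), so $\un\in\cS_{\ell-1,-}$ by induction, and the parallel cardinality count confirms the constraint on $n_\ell$ needed to apply Lemma~\ref{lma:phi_maps} and conclude $\um\in\cS_\ell$. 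The main obstacle in both directions is exactly this cardinality bookkeeping, which is what synchronises the dichotomy $s_j-r_j\in\set{k,k+1}$ with the bipartition in Lemma~\ref{lma:phi_maps}.
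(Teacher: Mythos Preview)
Your induction runs in the opposite direction from the paper's: you start at $\ell=0$ and go up, whereas the paper starts at $\ell=k$ (where $\um'$ is empty and the statement is trivial) and goes down. This matters, because the maps $\phi_1,\phi_2$ of Lemma~\ref{lma:phi_maps} are well adapted to the downward step but not to yours. Concretely, in your forward direction you assert that Lemma~\ref{lma:phi_maps} writes every $\um\in\cS_{\ell,+}$ as $\phi_1(\un)$ or $\phi_2(\un)$; but the lemma only gives bijections onto the subsets $\{\um\in\cS_{\ell,+}\mid k+2\in\um\}$ and $\{\um\in\cS_{\ell,+}\mid k+1\in\um\}$, and these do \emph{not} cover $\cS_{\ell,+}$. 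For $k=3$, $\ell=1$ the triple $\um=(1,3,6)$ lies in $\cS_{1,+}$ yet contains neither $k+1=4$ nor $k+2=5$, so your reduction to $1\in\um$ is too weak. The same example breaks your backward direction: here $\um'=(2,4,5,7)$, so $\{k+1,k+2\}=\{4,5\}\subset\um'$ and your exclusion argument fails. The flaw in that argument is the claim that $s_j\geqs r_j+k\geqs k+2$ ``prevents either from being an $s$-entry'': it only prevents $k+1$ from being one, since $k+2=s_1$ with $r_1=2$ is perfectly consistent (and occurs in the example).

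The paper sidesteps this by going the other way: given $\um\in\cS_{\ell-1}$, one $\tau$-shifts to $1\notin\um$ \emph{and} $m_\ell\leqs k+1$ (a further shift by $\tau^k$ handles the case $m_\ell\geqs k+2$), so that $\um$ lies in the \emph{domain} of $\phi_1$, which is all of $\{\um\in\cS_{\ell-1,-}\mid m_\ell\leqs k+1\}$. Then $\um\oplus(1,k+2)\in\cS_\ell$, the induction hypothesis at level $\ell$ gives the pairing for its complement, and adding back the pair $(1,k+2)$ gives the pairing for $\um'$. Your argument can be repaired by strengthening the $\tau$-reduction to force one of $k+1,k+2$ into $\um$ (this is always possible, as one sees by cycling through the shifts of $(1,3,6)$), but as written the inductive step does not go through.
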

\begin{proof}
The proof goes by induction on $\ell$, starting from $\ell=k$, downwards to $\ell=0$. When $\ell=k$, the
equivalence is trivially satisfied, because $\cS_k$ has a single element $\um=(1,2,\dots,2k+1)$, whose complement
$\um'$ has no entries. Suppose now that the above equivalence is true for some $\ell$ with $0<\ell\leqs k$ and for
all $\um'\in\cI^{(2k-2\ell)}$. Let $\um'=(r_1,r_2,\dots,r_{k-{\ell}+1},s_1,s_2,\dots,r_{k-{\ell}+1})$. We prove
that
\begin{equation}\label{eq:s-r}
  \um'\in\cS_{\ell-1}' \quad\mbox{if and only if} \quad s_j-r_j\in\set{k,k+1}\;,\quad\hbox{for
  }j=1,\dots,k-{\ell}+1\;.
\end{equation}
Let us assume first that $\um'\in\cS_{\ell-1}'$, i.e., that $\um\in\cS_{\ell-1}$. According to Lemma \ref{lma:tau},
$\tau_{2\ell-1}(\um)\in\cS_{\ell-1}$ and it is easy to see that if $\um'$ satisfies the right hand side of
\eqref{eq:s-r} then so does $\tau_{2k-2\ell+2}(\um')=\tau_{2\ell-1}(\um)'$.  We may therefore assume that
$1\notin\um$, but also that $m_\ell\leqs k+1$: indeed, if $1\notin\um$ and $m_\ell>k+1$ then it suffices to replace
$\um$ by $\tau_{2\ell-1}^k(\um)$ (note that $m_{\ell-1}\leqs k$, as follows easily by taking $s=\ell-1$ in item (1)
of Proposition~\ref{lma:S}).  According to Lemma \ref{lma:phi_maps}, $\um\oplus(1,k+2)\in\cS_{\ell}$. By the
recurrence hypothesis, we can write $(\um\oplus(1,k+2))'=(\bar r_2,\dots_,\bar r_{k-{\ell}+1},\bar s_2,\dots,\bar
s_{k-{\ell}+1})$, satisfying $s_j-r_j\in\set{k,k+1}$ for $j=2,\dots,k-{\ell}+1;$ also, $\bar r_2>1$ and $\bar
r_{k-{\ell}+1}<k+2<\bar s_2$. Then
\begin{equation*}
  (1,\bar r_2,\dots_,\bar r_{k-{\ell}+1},k+2,\bar s_2,\dots,\bar
  s_{k-{\ell}+1})=\um'=(r_1,r_2,\dots,r_{k-{\ell}},s_1,s_2,\dots,s_{k-{\ell}})\;,
\end{equation*}%
so that $r_j=\bar r_j$ and $s_j=\bar s_j$ for $j=2,3,\dots,k_\ell+1$. As a consequence, the entries of $\um'$
satisfy the right hand side of \eqref{eq:s-r}.

We now assume that the entries of $\um'$ satisfy the right hand side of \eqref{eq:s-r}. Again, using
$\tau_{2\ell-1}$ we may assume that $r_1=1$ and that $s_1=k+2$. Thus, $1\notin\um$ and $k+2\notin\um$. Then
\begin{equation*}%
  (\um\oplus(1,k+2))'=(r_2,r_3,\dots,r_{k-{\ell}+1},s_2,s_3,\dots,s_{k-{\ell}+1}) 
\end{equation*}%
with $s_j-r_j\in\set{k,k+1}$ for $j=2,\dots,k-{\ell}+1$. By the recursion hypothesis,
$\um\oplus(1,k+2)\in\cS_{\ell}$; more precisely, $\um\oplus(1,k+2)\in\set{\un\in\cS_{\ell,+}\mid k+2\in\un}$. By
Lemma \ref{lma:phi_maps}, $\um\in\cS_{\ell-1}$, and so $\um'\in\cS_{\ell-1}'$, as was to be shown.
\end{proof}

%%%%%%%%%%%%%%%%%%%%%%%%%%%%%%%%%%%%%%%%%%%%%%%%%%%%%%%%%%%%%%%%%%%%%%%%%%%%%%%%
%%%%%%%%%%%%%%%%%%%%%%%%%%%%%%%%%%%%%%%%%%%%%%%%%%%%%%%%%%%%%%%%%%%%%%%%%%%%%%%%
%%%%%%%%%%%%%%%%%%%%%%%%%%%%%%%%%%%%%%%%%%%%%%%%%%%%%%%%%%%%%%%%%%%%%%%%%%%%%%%%

\section{The deformed Poisson structure and its basic Casimir}\label{sec:poisson_defo}
In this section, we introduce a class of deformations of the Poisson structure~$\PB$, defined by
\eqref{eq:quad_poisson} and construct a Casimir for it. In the following proposition, we determine all constant
Poisson structures on $\bbR^{2k+1}$ which are compatible with $\PB$.
\begin{prp}\label{prop:compatible}
  Let $(b_{i,j})_{1\leqs i,j\leqs 2k+1}$ be an arbitrary skew-symmetric matrix with entries in $\bbR$ and consider
  the corresponding constant Poisson structure on $\bbR^{2k+1}$, defined by $\pb{x_i,x_j}_b:=b_{i,j}$ for $1\leqs
  i,j\leqs 2k+1$. Then $\PB_b$ is compatible with the quadratic Poisson structure $\PB$ if and only if $b_{i,j}=0$
  for all $i$ and $j$ such that $\vert i-j\vert\notin\set{k,k+1}$. In particular, the constant Poisson structures
  which are compatible with $\PB$ form a vector space of dimension $2k+1$.
\end{prp}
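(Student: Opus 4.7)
The plan is to transform the compatibility of the two Poisson structures into a linear condition on the matrix $(b_{i,j})$. Compatibility of $\PB$ and $\PB_b$ means that every $\bbR$-linear combination of the two brackets satisfies the Jacobi identity; since both brackets are Poisson on their own, this reduces to the vanishing of the mixed Schouten bracket, which in terms of coordinate functions reads
\begin{equation*}
\sum_{\mathrm{cyc}(i,j,l)} \bigl(\pb{\pb{x_i,x_j},x_l}_b + \pb{\pb{x_i,x_j}_b,x_l}\bigr) = 0
\end{equation*}
for all $i,j,l\in\cI$. The second summand vanishes because $\pb{x_i,x_j}_b = b_{i,j}$ is constant, while the first expands via Leibniz into a polynomial of degree one in the coordinates. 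Collecting the coefficient of each $x_r$ and using skew-symmetry of $A$ and $b$, one ends up with the single family of conditions
\begin{equation*}
(A_{l,i} + A_{l,j})\,b_{i,j} = 0\;,\qquad\hbox{for all pairwise distinct } i,j,l\in\cI\;,
\end{equation*}
the three coefficients of $x_i,x_j,x_l$ yielding equivalent conditions upon renaming indices. Thus $b_{i,j}$ may be non-zero only for pairs $(i,j)$ such that $A_{l,i}+A_{l,j}=0$ for every $l\notin\{i,j\}$.

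To identify these pairs, I would exploit the cyclic structure of $A$ and write $A_{l,m}=\epsilon(m-l\bmod 2k+1)$, where $\epsilon$ equals $+1$ on $\{1,\dots,k\}$, $-1$ on $\{k+1,\dots,2k\}$, and $0$ at $0$. Setting $d:=j-i\bmod 2k+1$, the condition becomes $\epsilon(u)+\epsilon(u+d)=0$ for every $u\in\bbZ/(2k+1)\bbZ$ with $u\neq 0,-d$; equivalently, the shift $u\mapsto u+d$ must interchange the $+1$-block $\{1,\dots,k\}$ and the $-1$-block $\{k+1,\dots,2k\}$, with the single point sent to $0$ allowed to be precisely the excluded position $u=-d$. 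A short case analysis on the cyclic interval $\{d+1,\dots,d+k\}$ (the image of the $+1$-block under the shift) shows that this occurs exactly when $d\in\{k,k+1\}$: for these two values the shift does swap the two blocks and the unique preimage of $0$ is indeed $u=-d$; for any other $d\in\{1,\dots,2k\}$, the interval $\{d+1,\dots,d+k\}$ meets $\{1,\dots,k\}$, producing some $u\neq 0,-d$ for which $\epsilon(u)+\epsilon(u+d)=\pm 2$.

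Translating $d\in\{k,k+1\}$ back in terms of $i<j$ gives exactly $|i-j|\in\{k,k+1\}$, and skew-symmetry of $b$ extends the conclusion to all pairs $(i,j)$. The dimension count is then immediate: the admissible pairs with $i<j$ split into $k+1$ pairs with $j-i=k$ and $k$ pairs with $j-i=k+1$, for a total of $2k+1$ free parameters. The main obstacle is the combinatorial analysis of when the shift of $\epsilon$ reverses its sign off the two excluded points, but it reduces to a one-line observation about cyclic intervals of length $k$ in $\bbZ/(2k+1)\bbZ$; the rest of the argument is routine Jacobi identity bookkeeping.
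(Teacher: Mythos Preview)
Your proposal is correct and follows essentially the same route as the paper's proof: both reduce compatibility to the single condition $(A_{l,i}+A_{l,j})\,b_{i,j}=0$ for all pairwise distinct $i,j,l$, and then analyze for which pairs $(i,j)$ this imposes no constraint. The only difference is cosmetic: the paper checks the condition by exhibiting, for each $|i-j|\notin\{k,k+1\}$, an explicit witness index (e.g.\ $l=i\pm1$) with $A_{l,i}+A_{l,j}\neq0$, whereas you encode $A$ via the sign function $\epsilon$ on $\bbZ/(2k+1)\bbZ$ and phrase the analysis as a shift argument on cyclic intervals; both arrive at the same conclusion with comparable effort.
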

\begin{proof}
Recall (e.g.\ from \cite[Section 3.3.2]{PLV}) that two Poisson structures are said to be \emph{compatible} when
their sum is a Poisson structure, i.e., when it satisfies the Jacobi identity. Denote
\begin{equation}\label{eq:PB_sum}
  \PB_b^{(k)}:=\PB+\PB_b\;.
\end{equation}
Then $\PB_b^{(k)}$ satisfies the Jacobi identity if and only if
\begin{equation*}
  \pb{\pb{x_i,x_j}_b^{(k)},x_m}_b^{(k)}+\cycl (i,j,m)=0\;,
\end{equation*}%
for all triplets of distinct indices $i,j,m\in\cI$, where $\cycl (i,j,m)$ has the obvious meaning.  Since
$\PB_b$ is a constant Poisson bracket this condition amounts to
%
%
%\begin{equation}\label{eq:jacobi_comp}
$  \pb{\pb{x_i,x_j},x_m}_b+\cycl (i,j,m)=0\;,$
%\end{equation}%
that is,
\begin{gather*}
  A_{i,j}\pb{x_ix_j,x_m}_b+\cycl (i,j,m)=
  A_{i,j}x_ib_{j,m}+A_{i,j}x_jb_{i,m}+\cycl (i,j,m)=\\
  A_{i,j}x_ib_{j,m}+A_{m,i}x_ib_{m,j}+\cycl (i,j,m)=
  x_ib_{j,m}\left(A_{i,j}+A_{i,m}\right)+\cycl (i,j,m)=0\;,
\end{gather*}%
which is in turn equivalent with
\begin{equation}\label{eq:b_times_sum_of_A}
  b_{j,m}\left(A_{i,j}+A_{i,m}\right)=0
\end{equation}
for all triplets of distinct indices $i,j,m\in\cI$. It follows that $\PB$ and $\PB_b$ are compatible if and only
if $b_{j,m}=0$ for all $j,m\in\cI$ with $j\neq m$ such that $A_{i,j}+A_{i,m}\neq 0$ for some $i\in\cI$, with $i\neq j$
and $i\neq m$.

Suppose that $j$ and $m$ satisfy $\vert j-m\vert\in\set{k,k+1}$; without loss of generality, we may assume
that $m=j+k$. From the structure of the $i$-th line of the matrix $A$ (see \eqref{eq:bogo_mat}) we see that
$A_{i,j}+A_{i,j+k}=0$ whenever $i\not\in\set{ j, j+k}$. Therefore, there are no constraints on the entries
$b_{j,m}$, when $\vert j-m\vert\in\set{k,k+1}$. We show that the other entries $b_{j,m}$ all have to be
zero. By permuting $j$ and $m$ if necessary, we may suppose that $j<m$. If $m-j<k$, then picking $i=j-1$
(or $i=m+1$ in case $j=1$) we have $A_{i,j}+A_{i,m}\neq0$; one arrives at the same result when $m-j>k+1$ by
picking $i=j+1$.  In both cases, it follows that the corresponding coefficient $b_{j,m}$ must be zero.
\end{proof}

Notice that the conditions on $b$ which are given in the above proposition are void when $k=1$.
\begin{rem}
In what follows we fix a constant skew-symmetric matrix $b$ with
$b=(b_{i,j})_{1\leqs i,j\leqs 2k+1}$ and $b_{i,j}=0$ for all $i$ and $j$ such that $\vert i-j\vert\notin\set{k,k+1}$.
We also consider the Poisson structure $\PB_b^{(k)}:=\PB+\PB_b$ as in the proof of the previous proposition.
Note that for any other constant skew-symmetric matrix $(c_{i,j})_{1\leqs i,j\leqs 2k+1}$
with $c_{i,j}=0$ for all $i$ and $j$ such that $\vert i-j\vert\notin\set{k,k+1}$
the Poisson structures $\PB_b^{(k)}$ and $\PB_c^{(k)}$ are compatible.
\end{rem}

The rank of the Poisson structure $\PB$ is $2k$, as follows from the fact that the rank of $A$ is $2k$ (see
e.g.\ \cite[Example~8.14]{PLV}). Thus, the Poisson matrix of $\PB$ has a non-vanishing $(2k\times 2k)$-minor; the
latter is a homogeneous polynomial $P_{4k}(x)$ of degree $4k$ since the entries of the Poisson matrix are
homogeneous and quadratic. The corresponding $(2k\times 2k)$-minor of the Poisson matrix of $\PB_b^{(k)}$ is a
non-homogeneous polynomial of degree $4k$, whose homogeneous part of top degree is $P_{4k}(x)$, hence this minor is
also non-zero (at a generic point of $\bbR^{2k+1}$), so that the rank of $\PB_b^{(k)}$ is $2k$.

We show that $\PB_b^{(k)}$ has a polynomial Casimir of degree $2k+1$ whose leading term is the Casimir
$C=x_1x_2\dots x_{2k+1}$ of $\PB$. To do this, we need some more notation. First, in order to eliminate many signs
in the formulas that follow, we define new constants $b'_{i,j}$, which are just the constants $b_{i,j}$, possibly
up to a sign: we define, $b'_{i,j}:=-b_{i,j}=b_{j,i}$ when $\vert i-j\vert=k+1$ and $b'_{i,j}:=b_{i,j}$ otherwise
(i.e., when $|i-j|=k$). Also, for $\um\in\cS_\ell$, we denote by $x_\um$ the product
$x_{\um}=x_{m_1}x_{m_2}\dots x_{m_{2\ell+1}}$ and by $b'_{\um}$ the product
\begin{equation*}
  b'_{\um}:=b'_{r_1,s_1}b'_{r_2,s_2}\dots b'_{r_{k-\ell},s_{k-\ell}}\;,
\end{equation*}%
where $\um'=(r_1,\dots,r_{k-\ell},s_1,\dots,s_{k-\ell})$ is the ordered list of all indices of $\cI$ that are
absent in $\um$ (see Proposition \ref{prp:m'}).

\begin{prp}\label{prop:deformed_casimir}
  The polynomial
  \begin{equation}\label{eq:defo_cas}
    K^b_k:=\sum_{\ell=0}^k\sum_{\um\in\cS_\ell}b'_\um x_\um
  \end{equation}%
  is a Casimir of $\PB_b^{(k)}=\PB+\PB_b$.
\end{prp}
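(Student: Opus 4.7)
The plan is to verify the Casimir identity $\pb{K^b_k,x_i}_b^{(k)}=0$ for every $i\in\cI$. Combining the cyclic symmetry of Lemma \ref{lma:tau} with the corresponding cyclic shift of the matrix $b$ (which preserves the support condition $|i-j|\in\set{k,k+1}$), it suffices to treat $i=1$. Splitting $K^b_k=\sum_{\ell=0}^k K^b_k[\ell]$ with $K^b_k[\ell]:=\sum_{\um\in\cS_\ell}b'_\um x_\um$ of $x$-degree $2\ell+1$, I would expand
\[
  \pb{K^b_k,x_1}_b^{(k)}=\sum_{\ell=0}^k\pb{K^b_k[\ell],x_1}+\sum_{\ell=0}^k\pb{K^b_k[\ell],x_1}_b,
\]
and group the result by $x$-degree: the intermediate piece of degree $2\ell+2$ (for $0\leq\ell\leq k-1$) equals $\pb{K^b_k[\ell],x_1}+\pb{K^b_k[\ell+1],x_1}_b$; the top degree $2k+2$ piece is $\pb{C,x_1}=0$ since $C=K^b_k[k]$ is a Casimir of $\PB$ (Section \ref{sec:bogo}); the degree $0$ piece is $\pb{K^b_k[0],x_1}_b$, to be handled separately.

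At each intermediate degree I would examine the coefficient of every monomial. Monomials $x_1^2 x_{\tilde\um}$ arise only from $\pb{K^b_k[\ell],x_1}$ via $\um:=\tilde\um\oplus(1)\in\cS_\ell$, with coefficient $b'_\um\sum_{j\in\um}A_{j,1}$; this vanishes because Proposition \ref{lma:S} with $m_1=1$ forces $\um$ to have $\ell$ entries in $\set{2,\dots,k+1}$ (each contributing $A_{j,1}=-1$) and $\ell$ entries in $\set{k+2,\dots,2k+1}$ (each contributing $+1$). The squarefree monomials $x_1x_\um$ with $1\notin\um\in\cI^{(2\ell+1)}$ receive $b'_\um(\sum_{j\in\um}A_{j,1})$ from the first sum (vanishing unless $\um\in\cS_\ell$), together with $\sum_r b_{r,1}\,b'_{\um\oplus(1,r)}$ from the second, where $r$ ranges over $\set{k+1,k+2}\setminus\um$ with $\um\oplus(1,r)\in\cS_{\ell+1}$ (only such $r$ make $b_{r,1}\neq0$).

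If $\um\notin\cS_\ell$ then Lemma \ref{lma:phi_maps} forces both $\um\oplus(1,k+1)$ and $\um\oplus(1,k+2)$ outside $\cS_{\ell+1}$ (otherwise $\phi_2^{-1}$ or $\phi_1^{-1}$ would place $\um\in\cS_\ell$), and the total vanishes. If $\um\in\cS_{\ell,-}$, then exactly one of $\phi_1(\um)=\um\oplus(1,k+2)$ (when $m_{\ell+1}\leq k+1$) or $\phi_2(\um)=\um\oplus(1,k+1)$ (when $m_{\ell+1}\geq k+2$) lies in $\cS_{\ell+1}$. Inspecting $\um'$ via Proposition \ref{prp:m'}, the first pair of its ordered pairing is $(1,s_1)$ with $s_1=k+2$ or $s_1=k+1$ respectively, and removing this pair yields exactly the ordered pairing of $\phi_i(\um)'$; hence $b'_{\phi_i(\um)}=b'_\um/b'_{1,s_1}$. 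Using the conventions $b'_{1,k+2}=-b_{1,k+2}$, $b'_{1,k+1}=b_{1,k+1}$ and $b_{r,1}=-b_{1,r}$, one finds $b_{k+2,1}b'_{\phi_1(\um)}=+b'_\um$ in the first case and $b_{k+1,1}b'_{\phi_2(\um)}=-b'_\um$ in the second. A direct count of the entries of $\um$ in $\set{2,\dots,k+1}$ versus $\set{k+2,\dots,2k+1}$ gives $\sum_{j\in\um}A_{j,1}=-1$ respectively $+1$, producing the cancellation $-b'_\um+b'_\um=0$ or $+b'_\um-b'_\um=0$.

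Finally the degree-$0$ identity $\sum_{j\in\cI}b'_{(j)}b_{j,1}=0$ collapses, since $b_{j,1}=0$ unless $j\in\set{k+1,k+2}$, to the single relation $b'_{(k+1)}b_{k+1,1}+b'_{(k+2)}b_{k+2,1}=0$, which I would verify by spelling out the pairings of $(k+1)'$ and $(k+2)'$: the two products differ by a single pair whose difference switches between $k$ and $k+1$, so the conventions on the $b'$'s produce exactly the opposite overall sign. (Conceptually, $K^b_k[0]$ is the Pfaffian-type Casimir of the constant bracket $\PB_b$ alone.) The main technical obstacle I anticipate is the combinatorial bookkeeping in the intermediate step: one must verify that stripping the pair $(1,s_1)$ from $\um'$ leaves precisely the ordered first-half/second-half pairing of $\phi_i(\um)'$, so that $b'_{\phi_i(\um)}$ differs from $b'_\um$ by exactly the factor $b'_{1,s_1}$. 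This rests on the sharp inequalities of Propositions \ref{lma:S} and \ref{prp:m'}.
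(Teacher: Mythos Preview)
Your overall strategy coincides with the paper's: reduce to $i=1$ via the cyclic symmetry, split $K^b_k$ by $x$-degree, and use the bijections $\phi_1,\phi_2$ of Lemma~\ref{lma:phi_maps} to cancel the cross terms between $\PB$ and $\PB_b$. Your treatment of the monomials $x_1^2x_{\tilde\um}$ (this is the paper's identity $\pb{x_1,C_{\ell,+}}=0$) and of the monomials $x_1x_\um$ with $1\notin\um$ (this is the paper's identity $\pb{x_1,C_{\ell+1,+}}_b+\pb{x_1,C_{\ell,-}}=0$) is correct, and your index $m_{\ell+1}$ is the right one after the shift $\ell\to\ell+1$ in Lemma~\ref{lma:phi_maps}.

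There is, however, a genuine gap. At each intermediate degree $2\ell+2$ you claim to ``examine the coefficient of every monomial'', but you only treat monomials that contain $x_1$. The piece $\pb{K^b_k[\ell+1],x_1}_b$ also produces monomials \emph{without} $x_1$: for $\un\in\cS_{\ell+1}$ with $1\notin\un$ and $r\in\{k+1,k+2\}\cap\un$ one gets the term $b'_\un\,b_{r,1}\,x_\un/x_r$, and $x_\un/x_r$ does not involve $x_1$. These contributions are precisely $\pb{C_{\ell+1,-},x_1}_b$ in the paper's notation, and you never show they vanish for $\ell\geqs0$.

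The paper closes this gap by a separate argument (its identity $\pb{x_1,C_{\ell,-}}_b=0$): for $\um\in\cS_{\ell,-}$ with $k+1\in\um\not\ni k+2$, the map $\psi$ that replaces $k+1$ by $k+2$ is a bijection onto $\{\um\in\cS_{\ell,-}\mid k+2\in\um\not\ni k+1\}$, and the paired contributions cancel because $b_{1,k+1}b'_\um=-b_{1,k+2}b'_{\psi(\um)}$. Your degree-$0$ computation is exactly the case $\ell=0$ of this (there $\um=(k+1)$ and $\psi(\um)=(k+2)$), so the missing step is simply the generalisation of your own final paragraph to all $\ell\geqs1$: pair the $x_1$-free terms via $k+1\leftrightarrow k+2$ and use the sign conventions on $b'$ to obtain the cancellation.
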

\begin{proof}
In order to simplify the notation in the proof, we will write $K^b_k$ simply as $C$.  We need to prove that
$\pb{x_j,C}_b^{(k)}=0$ for all $j\in\cI$. To do this, it is sufficient to prove that
$\pb{x_1,C}_b^{(k)}=0$. Indeed, consider the cyclic permutation $\tau:\cI\to\cI$ which sends $j$ to $j+1$, for
$j<2k+1$ and sends $2k+1$ to $1$ (it is $\tau_1$, as defined in \eqref{eq:tau_def}). The induced permutation on the
variables $x_i$ is denoted by $\tau^*$, so that $\tau^*x_1=x_2$, for example. As we already recalled, $\tau^*$ is a
Poisson automorphism of $\PB$; it is also a Poisson isomorphism between $\PB_b$ and $\PB_{\tau^*b}$, where
$\tau^*b$ is defined by $(\tau^*b)_{i,j}:=b_{i+1,j+1}$. Indeed,
\begin{equation*}
  \pb{\tau^*x_i,\tau^*x_j}_b=\pb{x_{i+1},x_{j+1}}_b=b_{i+1,j+1}=\pb{x_i,x_j}_{\tau^*b}=
  \tau^*\pb{x_i,x_j}_{\tau^*b}\;.
\end{equation*}%
It follows that $\tau^*$ is a Poisson automorphism between $\PB_b^{(k)}$ and $\PB_{\tau^*b}^{(k)}$. According to
Lemma~\ref{lma:tau}, $\tau^* C=C$, so that
\begin{equation*}
  \pb{x_2,C}_b^{(k)}=\pb{\tau^*x_1,\tau^*C}_b^{(k)}=\tau^*\pb{x_1,C}_{\tau^*b}^{(k)}\;,
\end{equation*}%
and we can conclude from $\pb{x_1,C}_b^{(k)}=0$ for all $b$ that $\pb{x_2,C}_b^{(k)}=0$ for all $b$, and
similarly that $\pb{x_j,C}_b^{(k)}=0$ for all $j\in\cI$ and for all $b$.

In order to show that $\pb{x_1,C}_b^{(k)}=0$, we decompose $C$ as $C=\sum_{\ell=0}^k(C_{\ell,+}+C_{\ell,-})$, where
\begin{equation*}
  C_{\ell,+}:=\sum_{\um\in\cS_{\ell,+}}b'_\um x_\um\;,\quad\hbox{and}\quad
  C_{\ell,-}:=\sum_{\um\in\cS_{\ell,-}}b'_\um x_\um\;.
\end{equation*}%
With this notation and in view of the definition \eqref{eq:PB_sum} of $\PB^{(k)}_b$, proving that
$\pb{x_1,C}_b^{(k)}=0$ amounts to proving that
\begin{equation*}%
  \sum_{\ell=0}^k\left(\pb{x_1,C_{\ell,+}}+\pb{x_1,C_{\ell,+}}_b+\pb{x_1,C_{\ell,-}}+
  \pb{x_1,C_{\ell,-}}_b\right)=0\;.
\end{equation*}%
We will do this by showing that
\begin{equation}\label{eq:3_eqs}
  \pb{x_1,C_{\ell,+}}=\pb{x_1,C_{\ell,+}}_b+\pb{x_1,C_{i-1,-}}=\pb{x_1,C_{\ell,-}}_b=0\;,
\end{equation}%
for all $\ell\geqs0$ (with the convention that $C_{-1,-}=0$).  We start with the first term. Let $\um\in\cS_\ell$
with $\um=(m_1,m_2,\ldots,m_{2\ell+1})$.  Recall that the sets $\cS_\ell$ are defined such that $\um\in\cS_\ell$ if
and only if $A'_{\um}=A^{(\ell)}$ (see \ref{eq:B_def}). If $\um\in\cS_{\ell,+}$ then $m_1=1$ and
$$
  \pb{x_1,x_{\um}}^{(k)}=x_1x_{\um}\left(\sum_{j=1}^{2\ell+1}A^{(k)}_{m_1,m_j}\right)=
  x_1x_{\um}\left(\sum_{j=2}^{2\ell+1}A^{(\ell)}_{1,j}\right)=0\;.
$$
This shows that $\pb{x_1,C_{\ell,+}}=0$.  We next prove the last equality in \eqref{eq:3_eqs}.  From the definition
of $\PB_b$ it follows that for any function $F$
\begin{equation}\label{eq:b_der}
  \{x_1,F\}_b=b_{1,k+1}\frac{\partial F}{\partial x_{k+1}}+b_{1,k+2}\frac{\partial F}{\partial x_{k+2}}\;.
\end{equation}
Therefore
$$
  \pb{x_1,C_{\ell,-}}_b=\sum_{\um}\pb{x_1,b'_{\um}x_{\um}}_b\;,
$$
where the sum can be restricted to those $\um\in\cS_{\ell,-}$ which contain $k+1$ or $k+2$. Since $1\notin\um$,
\eqref{eq:s-r} implies that $k+1$ and $k+2$ cannot both belong to $\um$. Also, the inequalities in Proposition
\ref{lma:S} imply that the vector $\psi(\um)$ whose effect is to replace $k+1$ in $\um$ by $k+2$ leads to a new
element of $\cS_{\ell,-}$ containing $k+2$ (and vice-versa). Therefore
\begin{eqnarray*}
  \pb{x_1,C_{\ell,-}}_b&=&
  \sum_{\stackrel{\um\in\cS_{\ell,-}}{k+1\in\um\not\ni k+2}}\pb{x_1,b'_{\um}x_{\um}}_b+
    \sum_{\stackrel{\um\in\cS_{\ell,-}}{k+1\not\in\um\ni k+2}}\pb{x_1,b'_{\um}x_{\um}}_b\\
  &=&\sum_{\stackrel{\um\in\cS_{\ell,-}}{k+1\in\um\not\ni k+2}}
    \(\pb{x_1,b'_{\um}x_{\um}}_b+\pb{x_1,b'_{\psi(\um)}x_{\psi(\um)}}_b\)\\
  &=&\sum_{\stackrel{\um\in\cS_{\ell,-}}{k+1\in\um\not\ni k+2}}\left(\frac{b_{1,k+1}b'_{\um}x_{\um}}{x_{k+1}}+
    \frac{b_{1,k+2}b'_{\psi(\um)}x_{\psi(\um)}}{x_{k+2}}\right)=0\;,
\end{eqnarray*}
since $x_\um/x_{k+1}= x_{\psi(\um)}/x_{k+2}$ and
$$
b_{1,k+1}b'_{\um}=b'_{1,k+1}b'_{\um}=b'_{1,k+2}b'_{\psi(\um)}=-b_{1,k+2}b'_{\psi(\um)}
$$
for all $\um\in\cS_{\ell,-}$ with $k+1\in\um$ and $k+2\not\in\um$.

It remains to be shown that $\pb{x_1,C_{\ell,+}}_b+\pb{x_1,C_{i-1,-}}=0$.
Using \eqref{eq:b_der}, we find
\begin{equation}\label{eq:cas_1}
  \pb{x_1,C_{\ell,+}}_b=
  \sum_{\stackrel{\un\in\cS_{\ell,+}}{k+1\in\un}} b_{1,k+1}b'_\un\frac{x_{\un}}{x_{k+1}}+
  \sum_{\stackrel{\un\in\cS_{\ell,+}}{k+2\in\un}} b_{1,k+2}b'_\un\frac{x_{\un}}{x_{k+2}}\;.
\end{equation}
For $\um\in\cS_{\ell-1,-}$, if $m_\ell\leq k+1$ then $\pb{x_1,x_\um}^{(k)}=x_1x_\um\left(\sum_{j=1}^{2\ell-1}
A_{1,m_j}\right)=x_1x_\um$, and similarly if $m_\ell\geq k+2$ then $\pb{x_1,x_\um}^{(k)}=-x_1x_\um$. Therefore,
\begin{equation}\label{eq:cas_2}
  \pb{x_1,C_{\ell-1,-}}^{(k)}=-\sum_{\stackrel{\um\in\cS_{\ell-1,-}}{m_\ell>k+1}}b'_\um x_1x_\um+
  \sum_{\stackrel{\um\in\cS_{\ell-1,-}}{m_\ell\leqs k+1}}b'_\um x_1x_\um\;.
\end{equation}
Up to a minus sign, the first sum in \eqref{eq:cas_1} and the first sum in \eqref{eq:cas_2} are equal, and
similarly for the second sums. We show this for the first sum, using the bijection $\phi_2$ from Lemma
\ref{lma:phi_maps} (for the second sum, one uses $\phi_1$):
\begin{equation*}
  \begin{array}{lcccc}
    \phi_2&:&\set{\um\in\cS_{\ell-1,-}\mid m_\ell\geqs k+2}&\to&\set{\un\in\cS_{\ell,+}\mid k+1\in\un}\\
    & & \um&\mapsto&\um\oplus(1,k+1)\;.
  \end{array}
\end{equation*}
For $\un:=\phi_2(\um)=\um\oplus(1,k+1)$, with $\um\in\set{\um\in\cS_{\ell-1,-}\mid m_\ell\geqs k+2}$, we have
$x_\un=x_\um x_1x_{k+1}$ and $b'_\un=b'_\um/b'_{1,k+1}=b'_\um/b_{1,k+1}$, so that
\begin{equation*}
  \sum_{\stackrel{\un\in\cS_{\ell,+}}{k+1\in\un}} b_{1,k+1}b'_\un\frac{x_{\un}}{x_{k+1}}=
  \sum_{\stackrel{\um\in\cS_{\ell-1,-}}{m_\ell>k+1}}b'_\um x_1x_\um\;.
\end{equation*}%
It follows that the right hand sides of \eqref{eq:cas_1} and \eqref{eq:cas_2} sum up to zero,
as was to be shown.
\end{proof}

%%%%%%%%%%%%%%%%%%%%%%%%%%%%%%%%%%%%%%%%%%%%%%%%%%%%%%%%%%%%%%%%%%%%%%%%%%%%%%%%
%%%%%%%%%%%%%%%%%%%%%%%%%%%%%%%%%%%%%%%%%%%%%%%%%%%%%%%%%%%%%%%%%%%%%%%%%%%%%%%%
%%%%%%%%%%%%%%%%%%%%%%%%%%%%%%%%%%%%%%%%%%%%%%%%%%%%%%%%%%%%%%%%%%%%%%%%%%%%%%%%

\section{Lax equation, first integrals and integrability}\label{sec:Lax}
We now introduce a family of deformations of the Bogoyavlenskij-Itoh Lotka-Volterra systems, which we will show to
be Liouville integrable. Let $c_1,c_2,\dots,c_{2k+1}$ be arbitrary constants, satisfying $\sum_{i=1}^{2k+1}
c_i=0$. The \emph{deformed Bogoyavlenskij-Itoh Lotka-Volterra system} is the system defined by the following
differential equations:
\begin{equation}\label{eq:LV_defo}
  \dot x_i = \sum_{j=1}^{2k+1} A_{i,j} x_i x_j+c_i\;, \ \ i=1,2, \dots , 2k+1 \; ,
\end{equation}
where the constants $A_{i,j}$ are given by (\ref{eq:bogo_mat}).  We first show that (\ref{eq:LV_defo}) is
Hamiltonian with respect to some of the Poisson structures $\PB^{(k)}_b$, introduced in Section
\ref{sec:poisson_defo}, with $H=\sum_{i=1}^{2k+1}x_i$ as Hamiltonian function. Before doing this, notice that if
(\ref{eq:LV_defo}) admits $H$ as a Hamiltonian function, then $H$ is a constant of motion of (\ref{eq:LV_defo}),
and so $\sum_{i=1}^{2k+1} c_i=0$, which explains why we have imposed the latter condition on the deformation
constants $c_i$. Let $b$ be as in Proposition \ref{prop:compatible}, i.e., $b$ is skew-symmetric and $b_{i,j}=0$
for all $i$ and $j$ such that $\vert{i-j}\vert\notin\set{k,k+1}$. Then the Hamiltonian vector field
$\Pb{H}^{(k)}_b$ is given by
\begin{equation}\label{eq:LV_ham}
  \dot x_i = \sum_{j=1}^{2k+1} A_{i,j} x_i x_j+b_{i,i+k}-b_{i-k,i}\;, \ \ i=1,2, \dots , 2k+1 \;,
\end{equation}
where, as above, all indices are taken modulo $2k+1$ and with values in $\cI=\set{1,2,\dots,2k+1}$.  Comparing
(\ref{eq:LV_defo}) and (\ref{eq:LV_ham}), we need to show that the following linear system admits a solution:
\begin{equation}\label{eq:lin_to_solve}
  b_{i,i+k}-b_{i-k,i}=c_i\;, \ \ i\in\cI\;.
\end{equation}%
Let $b_{1-k,1}:=c$ be arbitray. Then the solution to the equations (\ref{eq:lin_to_solve}) can be obtained by
solving them for the following values of $i$ (in that order and, as always, modulo $2k+1$): $i=1,k+1,2k+1,\dots$,
yielding unique values for $b_{1,1+k}, b_{1+k,1+2k},$ $b_{1+2k,1+3k}=b_{1+2k,k},\dots$. The final formula is given by
\begin{equation}\label{eq:b_sol}
  b_{i,i+k}=c_1+c_{1+k}+c_{1+2k}+\cdots+c_{i-k}+c_i+c\;, \quad\hbox{for}\quad i\in\cI\;.
\end{equation}
indeed, it follows from the latter formula at once that $b_{i,i+k}-b_{i,i-k}=c_i$ for all $i\in\cI$ and that
$$
  b_{1-k,1}=c_1+c_{1+k}+c_{1+2k}+\cdots+c_{2}+c_{2+k}+c=\sum_{i=1}^{2k+1}c_i+c=c\;,
$$
since $\sum_{i=1}^{2k+1} c_i=0$. We have thereby proven the following proposition:
\begin{prp}
  The deformed Bogoyavlenskij-Itoh Lotka-Volterra system
  \begin{equation*}
    \dot x_i = \sum_{j=1}^{2k+1} A_{i,j} x_i x_j+c_i\;, \ \ i=1,2, \dots , 2k+1 \; ,
  \end{equation*}
  with $\sum_{i=1}^{2k+1}c_i=0$ is a Hamiltonian system with respect to a (non-unique) Poisson structure
  $\PB^{(k)}_b$, the non-zero entries of $b$ being given by (\ref{eq:b_sol}) and with $H=\sum_{i=1}^{2k+1}x_i$ as
  Hamiltonian function.
\end{prp}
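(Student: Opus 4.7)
The plan is to compute the Hamiltonian vector field of $H=\sum_{i\in\cI}x_i$ with respect to the compatible Poisson structure $\PB^{(k)}_b=\PB+\PB_b$, match it coordinate-by-coordinate with the deformed system \eqref{eq:LV_defo}, and thereby reduce the assertion to solving a cyclic linear system in the entries of $b$.

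First I would split $\pb{x_i,H}_b^{(k)}=\pb{x_i,H}+\pb{x_i,H}_b$. The quadratic piece reproduces the undeformed Bogoyavlenskij--Itoh right-hand side $\sum_{j=1}^{2k+1} A_{i,j}\,x_ix_j$, while the constant piece equals $\sum_{j\in\cI} b_{i,j}$. By Proposition \ref{prop:compatible} only entries with $|i-j|\in\set{k,k+1}$ can contribute; invoking skew-symmetry together with the cyclic convention that indices are reduced modulo $2k+1$ to values in $\cI$, this sum collapses to $b_{i,i+k}-b_{i-k,i}$. Matching with \eqref{eq:LV_defo} therefore reduces the problem to finding an admissible $b$ satisfying $b_{i,i+k}-b_{i-k,i}=c_i$ for every $i\in\cI$.

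Second, I would regard this as a cyclic recurrence on the $2k+1$ unknowns $y_i:=b_{i,i+k}$, namely $y_i-y_{i-k}=c_i$. Because $\gcd(k,2k+1)=1$, iterating the shift $i\mapsto i-k$ visits every element of $\cI$ exactly once in $2k+1$ steps. Fixing an arbitrary initial value $b_{1-k,1}=c$ and solving successively along the progression $i=1,\,k+1,\,2k+1,\dots$ therefore produces a unique candidate, which after telescoping is precisely formula~\eqref{eq:b_sol}.

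The only delicate point, and the sole place where the hypothesis $\sum_i c_i=0$ enters, is the closure of the recurrence: after $2k+1$ iterations the procedure must reproduce the initial value $c$. Telescoping the increments shows the returned value equals $c+\sum_{i=1}^{2k+1}c_i$, so the system is consistent precisely when $\sum_i c_i=0$. Conversely, this condition is forced since a Hamiltonian function is always preserved by its own flow. Once consistency is verified, the constructed $b$ lies in the admissible class of Proposition \ref{prop:compatible}, and by construction the Hamiltonian vector field of $H$ with respect to $\PB^{(k)}_b$ coincides with~\eqref{eq:LV_defo}; the non-uniqueness of $b$ alluded to in the proposition corresponds exactly to the free choice of $c$.
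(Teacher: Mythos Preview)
Your proposal is correct and follows essentially the same approach as the paper: compute $\pb{x_i,H}_b^{(k)}$ by splitting into the quadratic and constant parts, reduce to the cyclic linear system $b_{i,i+k}-b_{i-k,i}=c_i$, solve it by iterating along the arithmetic progression $i=1,k+1,2k+1,\dots$ starting from an arbitrary $b_{1-k,1}=c$, and verify closure using $\sum_i c_i=0$. Your additional remarks---that $\gcd(k,2k+1)=1$ guarantees the iteration visits every index, and that the free parameter $c$ accounts for the non-uniqueness---make explicit what the paper leaves implicit, but the argument is the same.
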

In the following proposition we give a Lax equation (with spectral parameter) for the deformed Bogoyavlenskij-Itoh
Lotka-Volterra system (\ref{eq:LV_defo}).  Bogoyavlenskij's Lax equation (\ref{eq:bogo_lax}) is obtained from it by
putting all $b_{i,j}$ equal to zero.
\begin{prp}
  The system (\ref{eq:LV_defo}) can be written as the following Lax equation (with spectral parameter~$\l$) 
  \begin{equation}\label{eq:bogo_deformed_lax}
    (X+\lambda^{-1}\Delta+\lambda M)^\cdot=[X+\lambda^{-1}\Delta+\lambda M,B-\lambda M^{k+1}]
  \end{equation}
  where for $i,j\in \cI$ the $(i,j)$-th entry of the matrices $X,\,M$ and $B$ is respectively given by
  \begin{gather}
    X_{i,j}:=\delta_{i,j+k}x_i\;,\quad \Delta_{i,j}:=b_{i+k,j}\delta_{i,j}\;,\quad M_{i,j}:=\delta_{i+1,j}\;,
    \label{eq:bogo_defo_lax_entries_1}\\
    B_{i,j}:=b_i\delta_{i,j}:=-\delta_{i,j}(x_i+x_{i+1}+\cdots+x_{i+k})\;.\label{eq:bogo_defo_lax_entries_2}
  \end{gather}%
\end{prp}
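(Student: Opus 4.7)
The plan is to expand both sides as Laurent polynomials in $\l$ and match coefficients in each degree. Because $\Delta$ and $M$ have constant entries, the left hand side equals $\dot X$ (which sits in the $\l^0$ piece), whose only nonzero entries are $\dot X_{i,i-k}=\dot x_i$. Expanding the commutator on the right yields
\begin{equation*}
  [X,B]+\l^{-1}[\Delta,B]-[\Delta,M^{k+1}]+\l\left([M,B]-[X,M^{k+1}]\right)-\l^2[M,M^{k+1}]\;,
\end{equation*}
so one identity per power of $\l$ must be verified.

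Three of the four identities are immediate. The $\l^2$-coefficient vanishes because $M$ commutes with $M^{k+1}$; the $\l^{-1}$-coefficient vanishes because $\Delta$ and $B$ are both diagonal; and the $\l^1$-identity $[M,B]=[X,M^{k+1}]$ is precisely the identity which had to be checked for Bogoyavlenskij's undeformed Lax equation (\ref{eq:bogo_lax}). For this last one, I would give a short direct computation showing that both sides equal $\delta_{i+1,j}(x_i-x_{i+k+1})$, using $(M^{k+1})_{i,j}=\delta_{i+k+1,j}$.

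The substantive content is therefore the $\l^0$-identity $\dot X=[X,B]-[\Delta,M^{k+1}]$. Since $X$ is supported on the positions $(i,i-k)$ and since the positions $(i,i+k+1)$ on which $M^{k+1}$ lives coincide with the positions $(i,i-k)$ modulo $2k+1$, both commutators on the right hand side are supported on exactly those positions; thus the matrix identity reduces, for every $i\in\cI$, to a single scalar identity. The entry of $[X,B]$ at position $(i,i-k)$, computed from (\ref{eq:bogo_defo_lax_entries_2}), equals $x_i(b_{i-k}-b_i)$, which telescopes to $\sum_{j=1}^k x_i(x_{i+j}-x_{i-j})=\sum_{j\in\cI}A_{i,j}x_ix_j$, recovering the quadratic part of (\ref{eq:LV_defo}). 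The entry of $[\Delta,M^{k+1}]$ at position $(i,i-k)$, computed from (\ref{eq:bogo_defo_lax_entries_1}) and using the skew-symmetry of $b$ together with $i+k+1\equiv i-k\pmod{2k+1}$, equals $-(b_{i,i+k}-b_{i-k,i})$, and by the identification (\ref{eq:lin_to_solve}) this is exactly $-c_i$. Adding the two contributions reproduces (\ref{eq:LV_defo}).

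The main obstacle, modest in scope, will be the careful index bookkeeping modulo $2k+1$ in the $\l^0$ piece, in particular maintaining the sign conventions coming from the skew-symmetry of $b$ and from the asymmetric definition $\Delta_{i,i}=b_{i+k,i}$ rather than $b_{i,i+k}$; the appearance of this asymmetry is precisely what forces the $-[\Delta,M^{k+1}]$ term to yield $+c_i$ at position $(i,i-k)$, matching (\ref{eq:LV_defo}). Beyond this indexing, no new idea is needed: the proof is a matrix computation resting on the already-verified Bogoyavlenskij identity and on the construction of $b$ from the $c_i$ in (\ref{eq:b_sol}).
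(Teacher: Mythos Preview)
Your proposal is correct and follows essentially the same route as the paper's proof: both expand the Lax equation in powers of $\l$, dispose of the $\l^2$ and $\l^{-1}$ pieces via the obvious commutations, verify the $\l^1$ identity $[M,B]=[X,M^{k+1}]$ by a direct entrywise check, and then read off the system from the $\l^0$ identity $\dot X=[X,B]-[\Delta,M^{k+1}]$ at the positions $(i,i-k)$. Your explicit index bookkeeping on the $[\Delta,M^{k+1}]$ term (using skew-symmetry of $b$ and $i+k+1\equiv i-k$) is exactly what the paper does to obtain $b_{i,i+k}-b_{i-k,i}=c_i$.
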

\begin{proof}
To check that (\ref{eq:bogo_lax}) is equivalent to (\ref{eq:LV_ham}) it is sufficient to check that
(\ref{eq:LV_ham}) is equivalent with $\dot X=[X,B]-\lb{\Delta,M^{k+1}}$ and (since $M$ is constant) that
$[M,B]-[X,M^{k+1}]=0$; indeed, $[M,M^{k+1}]=0$ and $[\Delta,B]=0$ because $\Delta$ and $B$ are diagonal matrices.
For the second equality, one finds at once from (\ref{eq:bogo_defo_lax_entries_1}) that
\begin{equation*}
  ([M,B]-[X,M^{k+1}])_{i,j}=\delta_{i+1,j}(b_j-b_i-x_i+x_{j+k})=0\;.
\end{equation*}%
Since $B$ and $\Delta$ are diagonal matrices, $[X,B]_{i,j}=X_{i,j}(b_j-b_i)$ and
$-\lb{\Delta,M^{k+1}}_{i,j}=(b_{i,i+k}-b_{i-k,i})\delta_{i,j+k}$, with non-zero entries only when $j=i-k$; for
these entries, one has from the Lax equation
\begin{equation*}
  \dot x_i=\dot X_{i,i-k}=[X,B]_{i,i-k}-\lb{\Delta,M^{k+1}}_{i,i-k}=x_i(b_{i-k}-b_i)+b_{i,i+k}-b_{i-k,i}\;,
\end{equation*}%
which is the right hand side of (\ref{eq:LV_ham}). 
\end{proof}

Each coefficient of the characteristic polynomial of the Lax operator
\begin{equation}\label{eq:lax_operator}
  L^b(\l):=X+\lambda^{-1}\Delta+\lambda M
\end{equation}
gives a first integral for the deformed Bogoyavlenskij-Itoh Lotka-Volterra system. We know already one constant of
motion: the Casimir $K_k^b$, introduced in Proposition \ref{prop:deformed_casimir}. Indeed, $\dot
K_k^b=\pb{K_k^b,H}_b^{(k)}=0$. We will show in the Proposition \ref{prp:determinant_Lax} that $K_k^b$ appears
as a coefficient of the characteristic polynomial of the Lax operator $L^b(\l)$ (namely the coefficient of $\l^0$).
But first we prove a basic lemma on permutations, which will be used in the Proposition \ref{prp:determinant_Lax}
and will clarify its proof.
\begin{lem}\label{lma:permutations}
  For a permutation $\s$ of $\cI$, the following two conditions are equivalent:
  \begin{enumerate}
    \item[(i)] $\s(i)\in\set{i,i+k,i+k+1}$ for all $i\in\cI$;
    \item[(ii)] $\s$ is equal to one of the following:
      \begin{enumerate}
        \item[(1)] The $(2k+1)$-cycle $(1,1+k,1+2k,\dots,2,2+k)$;
        \item[(2)] The inverse of the $(2k+1)$-cycle $(1,1+k,1+2k,\dots,2,2+k)$;
        \item[(3)] A product of $k-\ell$ transpositions $(r_j,s_j)$ with disjoint support, and satisfying
          $s_j-r_j\in\set{k,k+1}$ for $j=1,\dots,k-\ell$.  
      \end{enumerate}
  \end{enumerate}
\end{lem}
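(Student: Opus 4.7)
The plan is to verify (ii)$\Rightarrow$(i) by direct inspection of each of the three types of permutations and then to establish (i)$\Rightarrow$(ii) by analyzing the cycle decomposition of $\s$. All computations are performed modulo $2k+1$ with values in $\cI$. For (ii)$\Rightarrow$(i), the $(2k+1)$-cycle in~(1) acts by $\s(i)\equiv i+k\pmod{2k+1}$, its inverse in~(2) by $\s(i)\equiv i+k+1\pmod{2k+1}$ (since $-k\equiv k+1$), and in~(3) fixed points contribute a shift of~$0$, while within a transposition $(r,s)$ with $s-r\in\set{k,k+1}$ one of $\s(r)-r$ and $\s(s)-s$ equals $k$ and the other equals $k+1$ modulo $2k+1$; in every case $\s(i)-i\pmod{2k+1}\in\set{0,k,k+1}$.

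For the harder direction (i)$\Rightarrow$(ii) I would set $\veps_i:=\s(i)-i\pmod{2k+1}\in\set{0,k,k+1}$ and examine the cycle decomposition of $\s$. Fixed points correspond to $\veps_i=0$ and are compatible with case~(3). In any non-trivial cycle $i_1\to i_2\to\cdots\to i_m\to i_1$ (with $m\geqs 2$) every shift $\veps_{i_j}$ must lie in $\set{k,k+1}$. The crucial observation is that two consecutive shifts cannot consist of one $k$ and one $k+1$ in either order: otherwise $i_{j+2}=i_j+k+(k+1)\equiv i_j\pmod{2k+1}$, which forces $m=2$. Hence either $m=2$, yielding a transposition $(r,s)$ with $s-r\in\set{k,k+1}$, or all shifts in the cycle are equal; in the latter case $mk\equiv 0$ or $m(k+1)\equiv 0\pmod{2k+1}$, and since $\gcd(k,2k+1)=\gcd(k+1,2k+1)=1$ this forces $m=2k+1$, so the cycle exhausts $\cI$ and $\s$ coincides with one of the cycles in~(1) or~(2).

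To conclude, a cycle of length $2k+1$ leaves no room for any further cycle, so in this case $\s$ is~(1) or~(2); otherwise all cycles of $\s$ have length $1$ or $2$ and $\s$ is a product of disjoint transpositions of the required form (case~(3)), the disjointness being automatic from the cycle decomposition. The main obstacle will be justifying the observation on consecutive shifts, which hinges on the identity $k+(k+1)=2k+1$; once this is in hand the classification follows from elementary divisibility considerations.
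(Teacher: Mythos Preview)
Your proof is correct and follows essentially the same approach as the paper's: both arguments hinge on the observation that along any orbit the shifts by $k$ and $k+1$ cannot alternate (since $k+(k+1)\equiv 0$), forcing a non-trivial cycle either to close after two steps or to have constant shift and hence length $2k+1$. The only difference is organizational---you phrase the argument via the full cycle decomposition and consecutive shifts, whereas the paper picks a single $j$ with $\s^2(j)\neq j$ and argues by recursion along its orbit; your formulation is arguably a bit cleaner in making the inductive step explicit.
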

\begin{proof}
The implication $(ii)\implies (i)$ is clear. We prove the other implication. Suppose that
$\s(i)\in\set{i,i+k,i+k+1}$ for all $i\in\cI$ and that $\s$ is not as stated in (3) above. Then there exist
$j\in\cI$ such that $\s^2(j)\neq j$. Since $\s(j)\in \set{i,i+k,i+k+1}$, we have either $\s(j)=j+k$ or
$\s(j)=j+k+1$. By replacing $\s$ with $\s^{-1}$, if needed, we may suppose that $\s(j)=j+k$. Since $\s^2(j)\neq
j\ (=j+2k+1)$, we must have $\s^2(j)=\s(j+k)=j+2k$. By recursion, $\s^{s}(j)=j+s k$ for $s=0,1,2,\dots$. Since $k$
and $2k+1$ are relatively prime,
\begin{equation*}
  \s=(j,\; j+k,\; j+2k,\; \dots,\; j+1,\; j+1+k)=(1,\; 1+k,\; 1+2k,\;\dots,\; 2,\; 2+k)\;.
\end{equation*}%
This proves that $\s$ is of the form $(1)$.
\end{proof}

\begin{prp}\label{prp:determinant_Lax}
  The determinant of the Lax operator $L^b(\l)$ is given by
  \begin{equation}\label{eq:det_Lax}
    \det L^b(\l)=\l^{2k+1}+\frac{1}{\lambda^{2k+1}}\prod_{j=1}^{2k+1}b_{j+k,j}+K_k^b\;,
  \end{equation}
  where $K_k^b$ is the Casimir (\ref{eq:defo_cas}) of $\PB_b^{(k)}$.
\end{prp}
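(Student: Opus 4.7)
The plan is to compute $\det L^b(\l)$ via its Leibniz expansion $\det L^b(\l)=\sum_\s\mathrm{sgn}(\s)\prod_i L^b_{i,\s(i)}$, use the sparsity of $L^b(\l)$ to restrict to a small class of permutations, and match their contributions with the three terms on the right-hand side of \eqref{eq:det_Lax}.

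By \eqref{eq:bogo_defo_lax_entries_1}--\eqref{eq:bogo_defo_lax_entries_2}, the non-zero entries of row~$i$ of $L^b(\l)$ are $L^b_{i,i-k}=x_i$, $L^b_{i,i}=\l^{-1}b_{i+k,i}$ and $L^b_{i,i+1}=\l$ (indices modulo $2k+1$), so only permutations $\s$ with $\s(i)-i\in\{-k,0,1\}\pmod{2k+1}$ contribute. In any non-trivial cycle of such $\s$ the shifts are $-k$ or $+1$; denoting by $a$ and $c$ their numbers, closure forces $-ak+c\equiv 0\pmod{2k+1}$, and together with $a+c=r\leqs 2k+1$ this yields $(a,c)=(2s+1,k-s)$, $r=k+s+1$, for some $s\in\{0,\dots,k\}$. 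In particular every contributing $\s$ has at most one non-trivial cycle; this is the content of Lemma~\ref{lma:permutations}. The $\l$-degree of $\prod_i L^b_{i,\s(i)}$ equals (\#~of~$+1$-shifts)$\,-\,$(\#~of~fixed~points), and the three cases --- $\s=\mathrm{id}$; $\s=(1,2,\dots,2k+1)$; and $\s$ with one non-trivial cycle of length $k+\ell+1$ and $k-\ell$ fixed points --- produce the powers $\l^{-(2k+1)}$, $\l^{2k+1}$ and $\l^0$. The first two, both with sign $(-1)^{2k}=+1$, give the outer terms of \eqref{eq:det_Lax}, so it remains to match the $\l^0$ coefficient with $K_k^b$.

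Associate to each $\s$ of the third family the set $\um(\s):=\{i\in\cI\mid\s(i)=i-k\}$, of cardinality $2\ell+1$. I claim that $\s\mapsto\um(\s)$ is a bijection from this family onto $\bigsqcup_{\ell=0}^k\cS_\ell$: given $\um\in\cS_\ell$, set $\s(i):=i-k$ on $\um$ and $\s(i):=i+1$ on just enough elements of $\um'$ to make $\s$ a permutation (closing up the cycle started by $\um-k$), the remaining elements of $\um'$ being fixed. Proposition~\ref{lma:S} is precisely what guarantees that this procedure terminates, produces a single cycle of length $k+\ell+1$, and leaves exactly $k-\ell$ fixed points. By Proposition~\ref{prp:m'}, $\um'$ decomposes into $k-\ell$ pairs $(r_j,s_j)$ with $s_j-r_j\in\{k,k+1\}$; a direct check shows that the fixed points of $\s$ are $f_j:=r_j$ when $s_j-r_j=k$ and $f_j:=s_j$ when $s_j-r_j=k+1$, and in both cases $b_{f_j+k,f_j}=-b'_{r_j,s_j}$. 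Combining $\mathrm{sgn}(\s)=(-1)^{k+\ell}$ (cycle of length $k+\ell+1$) with $\prod_j b_{f_j+k,f_j}=(-1)^{k-\ell}b'_\um$ yields $\mathrm{sgn}(\s)\prod_{i\text{ fixed}}b_{i+k,i}=(-1)^{2k}b'_\um=b'_\um$, and summing $b'_\um x_\um$ over $\ell$ and $\um\in\cS_\ell$ produces $K_k^b$.

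The main obstacle is the bijection $\s\leftrightarrow\um$ and the identification of the fixed points with the $f_j$'s: once this combinatorial picture is set up via Propositions~\ref{lma:S} and~\ref{prp:m'}, the sign/$b$-matching is the short explicit computation above, the $\l^{\pm(2k+1)}$ terms are immediate, and the vanishing of all other $\l$-powers follows directly from the cycle classification.
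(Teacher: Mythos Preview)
Your argument is correct, but the route differs from the paper's in a way worth recording. The paper first right-multiplies $L^b(\lambda)$ by the permutation matrix $E$ with $E_{j,j+k}=1$, producing a matrix $\Lambda$ of the same determinant whose nonzero row-$i$ entries sit in columns $i,\,i+k,\,i+k+1$ and carry $x_i,\;b_{i+k,i}/\lambda,\;\lambda$ respectively. Lemma~\ref{lma:permutations} is stated for \emph{that} shift set $\{0,k,k+1\}$ and classifies the contributing permutations $\tau$ as the two $(2k+1)$-cycles $i\mapsto i+k$ and $i\mapsto i+k+1$ together with products of disjoint transpositions $(r_j,s_j)$ with $s_j-r_j\in\{k,k+1\}$. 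The support of such a $\tau$ is then directly an element of $\cS_\ell'$ by Proposition~\ref{prp:m'}, and each transposition contributes a factor $(-1)\cdot\Lambda_{r_j,s_j}\Lambda_{s_j,r_j}=b'_{r_j,s_j}$, so the term $b'_\um x_\um$ falls out with the sign handled locally, one transposition at a time.

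You keep $L^b(\lambda)$ as is and work with shifts in $\{-k,0,+1\}$; your third family consists of permutations with one long cycle rather than products of transpositions. The two pictures are related by $\tau=(\hbox{shift by }k)\circ\sigma$, which is left multiplication rather than conjugation, so cycle types genuinely change --- in particular your citation of Lemma~\ref{lma:permutations} is not literal, and it is your own length bound $r\geqs k+1$ that establishes ``at most one non-trivial cycle''. Your bijection $\sigma\leftrightarrow\um$, your identification of the fixed points $f_j$, and the sign computation $(-1)^{k+\ell}\cdot(-1)^{k-\ell}=1$ are all correct; what is only sketched is why $\um(\sigma)$ always lies in $\cS_\ell$ and why the ``closing-up'' construction from $\um$ is well-defined and unique. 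Both become transparent after passing to $\tau$ and invoking Lemma~\ref{lma:permutations} with Proposition~\ref{prp:m'} --- which is exactly what the column permutation buys the paper.
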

\begin{proof}
Let $\Lambda:=L^b(\l) E$, where $E$ denotes the $(2k+1)\times (2k+1)$-matrix with entries $E_{j,j+k}=1$ for all
$j\in\cI$ and all other entries equal to zero. Thus, $\Lambda$ is just the matrix $L^b(\l)$ with its last $k$
columns placed first. The nonzero entries of $\Lambda$ are the following $(j\in\cI)$:
\begin{enumerate}
  \item[(1)] $\Lambda_{i,i}=x_i$;
  \item[(2)] $\Lambda_{i,i+k}=\frac{b_{i+k,i}}{\lambda}$;
  \item[(3)] $\Lambda_{i,i+k+1}=\lambda$.
\end{enumerate}
Also, $\det\Lambda=\det L^b(\l)$ since $\det E=1$. We compute this determinant by using the Leibniz formula:
$$
  \det(\Lambda)=\sum_{\s\in S_{\cI}}\operatorname{sgn}(\s)\prod_{i=1}^{2k+1}\Lambda_{i,\s(i)}\;,
$$
where $S_\cI$ is the permutation group of $\cI$ and $\operatorname{sgn}(\s)$ stands for the sign of a permutation
$\s$. Because of items (1) -- (3) above, $\prod_i\Lambda_{i,\s(i)}\neq 0$ if and only if
$\s(i)\in\set{i,i+k,i+k+1}$ for every $i\in\cI$. According to Lemma \ref{lma:permutations}, there are three
possibilities for a permutation $\s$ satisfying these conditions. For the first one,
$\s=(1,1+k,1+2k,\dots,2,2+k)$. Since $\operatorname{sgn}(\s)=1$, it leads to the following contribution in
$\det\Lambda$:
\begin{equation*}
  \prod_{i=1}^{2k+1}\Lambda_{i,i+k}=\frac1{\l^{2k+1}}\prod_{j\in\cI}b_{j+k,j}\;.
\end{equation*}%
For the second one, $\s^{-1}=(1,1+k,1+2k,\dots,2,2+k)$, leading to $\l^{2k+1}$ since again
$\operatorname{sgn}(\s)=1$ and since $\Lambda_{i,i+k+1}=\lambda$ for all $i$. For the third and final one, $\s$ is
the product of transpositions,
$$
\s=(r_1,\ s_1)(r_2,\ s_2)\cdots(r_{k-\ell},\ s_{k-\ell})\;,
$$
with $s_j-r_j\in\set{k,k+1}$ for $j=1,\dots,k-\ell$. In view of
Proposition \ref{prp:m'} we define $\um\in\cS_\ell$ to be the element
$\um'=(r_1,r_2,\dots,r_{k-\ell},s_1,s_2,\dots,s_{k-\ell})\in\cS_\ell$. The
corresponding term in $\det(\Lambda)$ is
%\footnote{Something has to be said about the signs, which disappear thanks to the primes!}
%
%
\begin{equation*}
  \operatorname{sgn}(\s)\prod_{j=1}^{k-\ell}(\Lambda_{r_j,s_j}\Lambda_{s_j,r_j})\prod_{j=1}^{2\ell+1} x_{m_j}
  =\prod_{j=1}^{k-\ell}b'_{r_j,s_j}x_{\um}=b'_\um x_\um\;.
\end{equation*}%
Summing up all these contributions, we get in view of Definition \ref{eq:defo_cas} of $K_k^b$,
precisely (\ref{eq:det_Lax}).
\end{proof}

We can obtain from the above formula (\ref{eq:det_Lax}) for the determinant of the Lax operator $L^b(\l)$ easily a
formula for its characteristic polynomial $\det(L^b(\l)-\mu\Id)$ by the following trick: since the $j$-th diagonal
entry of $L^b(\l)$ is $b_{j+k,j}/\l$ and since the parameters $b_{i,j}$ appear nowhere else in the Lax matrix, it
suffices to replace in all formulas $b_{j+k,j}/\l$ by $b_{j+k,j}/\l-\mu$. This can be done by replacing every
$b_{j+k,j}$ by $b_{j+k,j}-\l\mu$. Notice that when the constants $b_{j+k,j}$ are viewed as a vector $b$ (with
$2k+1$ entries), then this substitution amounts to replacing $b$ with $b-\l\mu\bfone$, where
$\bfone=(1,1,\dots,1)$. This proves part of the following proposition:
\begin{prp}\label{prp:char_Lax}
  The characteristic polynomial of the Lax operator $L^b(\l)$ is given by
  \begin{equation}\label{eq:char_poly_Lax}
    \chi(L^b(\l),\mu)=\det(L^b(\l)-\mu\Id)=\l^{2k+1}+\frac{1}{\lambda^{2k+1}}\prod_{j=1}^{2k+1}(b_{j+k,j}-\l\mu)+
    K_k^{b-\l\mu\bfone}\;.
  \end{equation}%
  The coefficients $K_\ell^b$ in the expansion
  \begin{equation}\label{eq:casimir_expansion}
    K_k^{b-\l\mu\bfone}=\sum_{\ell=0}^k (\l\mu)^{k-\ell}K_\ell^b\;,
  \end{equation}%
  are first integrals of the deformed Bogoyavlenskij-Itoh Lotka-Volterra system (\ref{eq:LV_ham}). Also, each
  $K_\ell^b$ is of total degree $2\ell+1$ (as a polynomial in $x_1,\dots,x_{2k+1}$), with leading term $K_\ell$.
\end{prp}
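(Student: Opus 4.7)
The plan is to first establish formula \eqref{eq:char_poly_Lax}, then expand it to extract the $K_\ell^b$ and identify their leading terms, and finally invoke isospectrality to conclude that they are first integrals.

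For \eqref{eq:char_poly_Lax}, the key observation is already contained in the paragraph preceding the proposition: the Lax operator $L^b(\l)$ depends on $b$ only through the diagonal entries $\Delta_{i,i}=b_{i+k,i}$, so
\begin{equation*}
  L^b(\l)-\mu\Id=L^{b-\l\mu\bfone}(\l)\;,
\end{equation*}
where $b-\l\mu\bfone$ denotes the vector $(b_{j+k,j})_{j\in\cI}$ with $\l\mu$ subtracted from every entry. Applying Proposition \ref{prp:determinant_Lax} to the right-hand side yields \eqref{eq:char_poly_Lax} directly.

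For the expansion, I would first verify that under $b\mapsto b-\l\mu\bfone$, every $b'_{r,s}$ (with $r<s$ and $s-r\in\set{k,k+1}$) gets shifted by $+\l\mu$. This is a short case check: if $s-r=k$, then $b'_{r,s}=b_{r,s}=-b_{s,r}$ and $b_{s,r}=b_{r+k,r}$ is precisely one of the shifted vector entries, so $b'_{r,s}$ shifts by $+\l\mu$; if $s-r=k+1$, then modular wrap-around in the indexing of $b$ shows that $b_{r,s}=b_{r,r+k+1}$ is itself a vector entry (it equals $b_{j+k,j}$ with $j=r+k+1$), and the extra sign in $b'_{r,s}=-b_{r,s}$ gives again a shift by $+\l\mu$. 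Consequently, for each $\um\in\cS_\ell$,
\begin{equation*}
  b'_\um \ \longmapsto\ \prod_{j=1}^{k-\ell}\bigl(b'_{r_j,s_j}+\l\mu\bigr)\;,
\end{equation*}
and substituting into the formula of Proposition \ref{prop:deformed_casimir},
\begin{equation*}
  K_k^{b-\l\mu\bfone}=\sum_{\ell=0}^k\sum_{\um\in\cS_\ell}\prod_{j=1}^{k-\ell}\bigl(b'_{r_j,s_j}+\l\mu\bigr)x_\um\;.
\end{equation*}

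Grouping by powers of $\l\mu$ then defines the $K_\ell^b$. For fixed $m$, each $\um\in\cS_m$ contributes to the coefficient of $(\l\mu)^q$ for $q\leqs k-m$ via the sum over size-$q$ subsets of the $k-m$ factors; the resulting term has degree $2m+1$ in the $x_i$'s. Setting $q=k-\ell$, the only contribution of maximal degree $2\ell+1$ in the $x_i$'s comes from $m=\ell$, in which case the empty product of $b'$'s gives precisely $\sum_{\um\in\cS_\ell}x_\um=K_\ell$. This establishes the degree statement and the identification of the leading term. Finally, each $K_\ell^b$ is a first integral because the Lax equation \eqref{eq:bogo_deformed_lax} is of commutator form, so $L^b(\l)$ evolves isospectrally and every coefficient of its characteristic polynomial (as a polynomial in $\l$ and $\mu$) is a constant of motion. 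The only genuine technical point is the sign/modular analysis in the expansion; this is routine but easy to get wrong, and is the step I would be most careful about.
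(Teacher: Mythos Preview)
Your argument is correct. The derivation of \eqref{eq:char_poly_Lax} via the identity $L^b(\l)-\mu\Id=L^{b-\l\mu\bfone}(\l)$ is exactly what the paper does in the paragraph preceding the proposition, and the appeal to isospectrality for the first-integral claim is likewise the same.

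Where you diverge from the paper is in establishing the expansion \eqref{eq:casimir_expansion} and the leading-term statement. The paper uses a weighted-homogeneity trick: it assigns weight~$2$ to each $b_{j+k,j}$ and weight~$1$ to the $x_i$, $\l$, $\mu$, observes that $L^b(\l)-\mu\Id$ then has weight-homogeneous entries of degree~$1$, so the characteristic polynomial is weight-homogeneous of degree~$2k+1$; this immediately yields the form of the expansion and the weight of each $K_\ell^b$, and setting $b=0$ then identifies the top-degree part with $K_\ell$ via comparison with \eqref{eq:bogo_char_poly}. Your route instead substitutes directly into the combinatorial formula of Proposition~\ref{prop:deformed_casimir}, verifies by the case analysis that each $b'_{r,s}$ shifts by $+\l\mu$, and expands the resulting products. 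This is more hands-on but has the advantage of producing an explicit combinatorial expression for each $K_\ell^b$ (a sum over $\um\in\cS_m$, $m\leqs\ell$, weighted by partial products of the $b'_{r_j,s_j}$), whereas the paper's weight argument only gives the degree and leading term. Your sign/modular check is correct in both cases; the point that $b_{r,r+k+1}=b_{j+k,j}$ with $j\equiv r+k+1$ is exactly the wrap-around needed.
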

\begin{proof}
Formula (\ref{eq:char_poly_Lax}) was proven above. In order to prove the other statements, we first point out that
if we attribute the weight $2$ to every $b_{j+k,j}$ and the weight $1$ to all variables $x_i$, to $\l$ and to $\mu$
then every entry of the matrix $L^b(\l)-\mu\Id$ is homogeneous of degree 1, and so the characteristic polynomial
(\ref{eq:char_poly_Lax}) is weight homogeneous of degree $2k+1$ with respect to these weights. It follows that
$K_k^{b-\l\mu\bfone}$ can indeed be expanded as in (\ref{eq:casimir_expansion}) and that every coefficient
$K_\ell^b$ is weight homogeneous of degree $2\ell+1$ (depending polynomially on the variables $x_i$ and the
constants $b_{j+k,j}$). Since these polynomials are (possibly up to a constant) coefficients of the characteristic
polynomial of the Lax operator, they are constants of motion. Setting $b=0$ in (\ref{eq:char_poly_Lax}) and in
(\ref{eq:casimir_expansion}) we have
\begin{equation*}
  \chi(L(\l),\mu)=\l^{2k+1}-\mu^{2k+1}+K_k^{-\l\mu\bfone}=
  \l^{2k+1}-\mu^{2k+1}+\sum_{\ell=0}^k (\l\mu)^{k-\ell}K_\ell^0\;,
\end{equation*}%
which, compared with (\ref{eq:bogo_char_poly}), to wit,
\begin{equation*}
  \chi(L(\l),\mu)=\l^{2k+1}-\mu^{2k+1}+\sum_{\ell=0}^k(\l\mu)^{k-\ell}K_\ell\;,
\end{equation*}%
shows that $K_\ell$ is obtained from $K_\ell^b$ by putting all constants in $b$ equal to zero; since the constants
in $b$ are of weight $2$ and both $K_\ell$ and $K_\ell^b$ are weight homogeneous of degree $2\ell+1$, it follows
that $K_\ell^b$ is of total degree $2\ell+1$ (as a polynomial in $x_1,\dots,x_{2k+1}$), with leading term $K_\ell$,
as asserted.
\end{proof}

We have constructed $k+1$ polynomial first integrals $K_0^b,K_1^b,\dots,K_k^b$ for the deformed Bo\-go\-yav\-len\-skij-Itoh
Lotka-Volterra system~(\ref{eq:LV_defo}), which is a Hamiltonian system on the Poisson manifold $(\bbR^{2k+1},\PB_b^{(k)})$.
We now prove the Liouville integrability of this system.
\begin{teo}
  The deformed Bogoyavlenskij-Itoh Lotka-Volterra system (\ref{eq:LV_defo}) is Liouville integrable, with
  independent first integrals $K_0^b,K_1^b,\dots,K_k^b$ which are pairwise in involution with respect to
  $\PB_b^{(k)}$.
\end{teo}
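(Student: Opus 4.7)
The plan is to derive pairwise involution from a Lenard--Magri bi-Hamiltonian chain, using a second, constant, compatible Poisson structure. Let $\PB_\bfone$ denote the constant Poisson bracket corresponding to $\bfone=(1,\dots,1)$, i.e.\ obtained by setting $b_{j+k,j}=1$ for every $j\in\cI$. Proposition~\ref{prop:compatible} guarantees that $\PB+\PB_{b-T\bfone}=\PB_b^{(k)}-T\PB_\bfone$ is Poisson for every $T\in\bbR$, so $\PB_b^{(k)}$ and $\PB_\bfone$ are compatible and form a Poisson pencil. Applying Proposition~\ref{prop:deformed_casimir} with $c:=b-T\bfone$ shows that $K_k^{b-T\bfone}$ is a Casimir of this pencil, and by Proposition~\ref{prp:char_Lax} its $T$-expansion reads $K_k^{b-T\bfone}=\sum_{\ell=0}^kT^{k-\ell}K_\ell^b$. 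Substituting into the Casimir identity $\pb{K_k^{b-T\bfone},f}_b^{(k)}=T\pb{K_k^{b-T\bfone},f}_\bfone$ and matching coefficients of $T^s$ yields three kinds of output: at $s=0$, that $K_k^b$ is a Casimir of $\PB_b^{(k)}$; at $s=k+1$, that $K_0^b=H$ is a Casimir of $\PB_\bfone$; and for $1\leq s\leq k$, the Lenard--Magri recursion
\begin{equation*}
  \pb{K_\ell^b,\cdot}_b^{(k)}=\pb{K_{\ell+1}^b,\cdot}_\bfone,\qquad \ell=0,\dots,k-1.
\end{equation*}

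Set $P(i,j):=\pb{K_i^b,K_j^b}_\bfone$. The recursion combined with skew-symmetry of the brackets gives $P(i+1,j)=\pb{K_i^b,K_j^b}_b^{(k)}=P(i,j+1)$ for $0\leq i,j\leq k-1$, so $P$ is constant along every anti-diagonal $i+j=\mathrm{const}$ within the grid $\set{0,\dots,k}^2$. Since $K_0^b=H$ is a Casimir of $\PB_\bfone$, $P(0,j)=0$, whence $P(i,j)=0$ whenever $i+j\leq k$. For $i+j>k$, the two reflected grid endpoints $(k,i+j-k)$ and $(i+j-k,k)$ of that anti-diagonal are connected through it and hence carry the same value of $P$; on the other hand, skew-symmetry of the bracket gives $P(k,i+j-k)=-P(i+j-k,k)$, so both must vanish. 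Thus $P\equiv 0$, which yields $\pb{K_i^b,K_j^b}_b^{(k)}=P(i+1,j)=0$ for $0\leq i\leq k-1$; and $\pb{K_k^b,K_j^b}_b^{(k)}=0$ is immediate from the Casimir property of $K_k^b$.

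For functional independence, write $K_\ell^b=K_\ell+R_\ell^b$ with $R_\ell^b$ of total $x$-degree strictly less than $2\ell+1$ (Proposition~\ref{prp:char_Lax}). Under the dilation $x\mapsto\epsilon x$, the $i$-th row of the Jacobian $(\partial K_i^b/\partial x_j)(\epsilon x)$ equals $\epsilon^{2i}(\partial K_i/\partial x_j)(x)+O(\epsilon^{2i-1})$; rescaling this row by $\epsilon^{-2i}$ and letting $\epsilon\to\infty$ produces the Jacobian of Itoh's integrals $(K_0,\dots,K_k)$, which has full row rank $k+1$ at a generic $x$ (Section~\ref{sec:bogo}). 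Hence $dK_0^b,\dots,dK_k^b$ are linearly independent on a dense open subset of $\bbR^{2k+1}$. Since $\PB_b^{(k)}$ has rank $2k$ on $\bbR^{2k+1}$ (as noted at the start of Section~\ref{sec:poisson_defo}) and the $K_\ell^b$ are first integrals by Proposition~\ref{prp:char_Lax}, the triple $(\bbR^{2k+1},\PB_b^{(k)},(K_0^b,\dots,K_k^b))$ satisfies all criteria of Liouville integrability. The most delicate point is making both boundary powers $T^0$ and $T^{k+1}$ of the pencil expansion yield genuine Casimirs (of $\PB_b^{(k)}$ and of $\PB_\bfone$ respectively): without the $\PB_\bfone$-Casimir anchoring the chain at its top end, the anti-diagonal argument would not close.
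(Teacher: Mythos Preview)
Your proof is correct and follows essentially the same route as the paper: both use the Poisson pencil $\PB_b^{(k)}-T\PB_\bfone$, identify $K_k^{b-T\bfone}$ as its Casimir via Proposition~\ref{prop:deformed_casimir}, expand in $T$ using Proposition~\ref{prp:char_Lax}, and invoke the Lenard--Magri mechanism for involution, together with the leading-term argument for independence. The only difference is that the paper cites the Lenard--Magri scheme as a black box (listing the four consequences), whereas you spell out the anti-diagonal argument explicitly; this is an unpacking of the same step, not a different approach.
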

\begin{proof}
According to Proposition  \ref{prp:char_Lax}, the $k+1$ polynomials $K_\ell^b$, defined by 
\begin{equation}\label{eq:exp_nu}
  K_k^{b-\nu\bfone}=\sum_{\ell=0}^k \nu^{k-\ell}K_\ell^b\;
\end{equation}%
are first integrals of (\ref{eq:LV_defo}). By the same proposition, the leading term of $K_\ell^b$ (as a polynomial
in $x_1,\dots,x_{2k+1}$) is $K_\ell$; since the polynomials $K_0,K_1,\dots,K_k$ are (functionally) independent, the
latter fact implies that the same is true for the deformed polynomials $K_0^b,K_1^b,\dots,K_k^b$. It remains to be
shown that the latter polynomials are in involution with respect to $\PB_b^{(k)}$. To do this, we use the
Lenard-Magri scheme in the context of Poisson pencils (see \cite{magri}). Recall from Proposition
\ref{prop:deformed_casimir} that $K_k^b$ is a Casimir for $\PB_b^{(k)}$. It follows that for any $\nu$,
$K_k^{b-\nu\bfone}$ is a Casimir of $\PB_{b-\nu\bfone}^{(k)}=\PB_{b}^{(k)}-\nu\PB_\bfone$. Said differently,
$K_k^{b-\nu\bfone}$ is a Casimir for the Poisson pencil $\PB_{b}^{(k)}-\nu\PB_\bfone$. Its expansion
(\ref{eq:exp_nu}) implies according to the Lenard-Magri scheme the following facts:
\begin{enumerate}
  \item[(1)] The polynomials $K_\ell^b$ are in involution with respect to $\PB_b^{(k)}$;
  \item[(2)] They are also in  involution with respect to $\PB_\bfone$;
  \item[(3)] Each of the integrable vector fields is bi-Hamiltonian:
    $$
    \Pb{K_\ell^b}_b^{(k)}=\Pb{K_{\ell+1}^b}_\bfone\,;
    $$ 
  \item[(4)]$K_k^b$ is a Casimir of $\PB_b^{(k)}$ (as we already know) and $K_0^b$, which is just $H$, is a Casimir
    of $\PB_\bfone$ (which is quite obvious).
\end{enumerate}
In particular, the first integrals are in involution with respect to $\PB_b^{(k)}$, as was to be shown.
\end{proof}

\begin{rem}
The proof shows that the functions $K_\ell^b, \ell=0,1,\ldots, k$ are also in involution with respect to the bracket
$\PB_{\bfone}$.
\end{rem}

\begin{rem}
For $b=0$, we get that the functions $K_\ell, \ell=0,1,\ldots, k$ are in involution with respect to the brackets
$\PB^{(k)}$, which provides an alternative proof to Itoh's combinatorial proof of this fact.
\end{rem}

%%%%%%%%%%%%%%%%%%%%%%%%%%%%%%%%%%%%%%%%%%%%%%%%%%%%%%%%%%%%%%%%%%%%%%%%%%%%%%%%
%%%%%%%%%%%%%%%%%%%%%%%%%%%%%%%%%%%%%%%%%%%%%%%%%%%%%%%%%%%%%%%%%%%%%%%%%%%%%%%%
%%%%%%%%%%%%%%%%%%%%%%%%%%%%%%%%%%%%%%%%%%%%%%%%%%%%%%%%%%%%%%%%%%%%%%%%%%%%%%%%

\section{Relation with the Veselov-Shabat system}\label{sec:veselov}

In this section we will give a recursive formula for constructing the first integrals $K_\ell^b$ which shows that
the combinatorial argument of Itoh can be used for their definition as in the case of the polynomials $K_\ell$,
the first integrals of the Bogoyavlenskij-Itoh system (see \cite{PPPP}). We will also show that the system
\eqref{eq:LV_ham} and the Veselov-Shabat system constructed in \cite{VS} are isomorphic by constructing a Poisson
isomorphism between $(\mathbb R^{2k+1},\PB_{b}^{(k)})$ and $(\mathbb R^{2k+1},\PB_v)$ where $\PB_v$ is the Poisson
structure constructed in \cite{VS} as part of the Hamiltonian formalism of the Veselov-Shabat system.
The Poisson structure $\PB_v$ is defined by the formulas $\pb{g_i,g_j}_v=(-1)^{j-i+1}g_ig_j$ if $j>i+1$ and
$\pb{g_i,g_{i+1}}_v=g_ig_{i+1}+\beta_{i+1}$ and the Veselov-Shabat system is the Hamiltonian system with Poisson
structure $\PB_v$ and Hamiltonian the sum of the variables $\sum_{j=1}^ng_j$.

We define a permutation $\rho$ of $\cI$ by setting $\rho(\ell):=\rho_{\ell}=(\ell-1)k+1$ for all $\ell\in\mathbb Z$.
Then, one defines the linear map $g:(\mathbb R^{2k+1},\PB_b^{(k)})\rightarrow (\mathbb R^{2k+1},\PB_v)$ by
$$
(x_1,x_2,\ldots,x_{2k+1})\mapsto (g_1,g_2,\ldots,g_{2k+1}):=(x_{\rho_1},x_{\rho_2},\ldots,x_{\rho_{2k+1}})
$$
which is easily checked to be a Poisson isomorphism. The parameters $b_{i,i+k}$ of the first Poisson structure are
related to the parameters $\beta_i$ of the second Poisson structure by $\beta_{i+1}=b_{\rho_i,\rho_i+k}$ for all
$i=1,2,\ldots,2k+1$. Since the Hamiltonian $\sum_{j=1}^nx_j$ of the system \eqref{eq:LV_ham} is mapped under $g$ to
the Hamiltonian $\sum_{j=1}^ng_j$ of the Veselov-Shabat system, the two systems are isomorphic.

Following \cite{VS} we define new variables $f_i$ which are related to the $g_i$ by the formulas $g_i=f_i+f_{i+1}$
or equivalently by $f_i=\frac{1}{2}\sum_{j=1}^{2k+1}(-1)^{j+1}g_{i+j-1},\; i=1,2,\ldots, 2k+1$.  Then the system of
Veselov and Shabat in the $f_i$ variables reads
\begin{equation}
\label{eq:VS_sys}
\dot{f}_i+\dot{f}_{i+1}=f_{i+1}^2-f_i^2+\beta_{i+1}-\beta_i,\quad i=1,2,\ldots,2k+1
\end{equation}
and the matrix equation $\dot{L}_i=L_i U_{i+1}-U_i L_i$ where
%\begin{equation*}
%L_i=
%\begin{pmatrix}
%f_i & 1 \\
%f_i^2+\beta_i-\lambda & f_i
%\end{pmatrix}, \qquad
%U_i=
%\begin{pmatrix}
%0 & 1 \\
%\dot{f}_i+f_i^2+\beta_i-\lambda & 0
%\end{pmatrix}
%\end{equation*}
\begin{equation*}
L_i=
\begin{pmatrix}
f_i & 1 \\
f_i^2+\beta_i-\lambda & f_i
\end{pmatrix}, \qquad
U_i=
\begin{pmatrix}
0 & 1 \\
F_i+f_i^2+\beta_i-\lambda & 0
\end{pmatrix}
\end{equation*}
with
\begin{gather*}
F_i=g_{i+1}(g_{i}-g_{i+2}-g_{i+4}-\cdots-g_{i+2k})+
g_{i+3}(g_{i}+g_{i+2}-g_{i+4}-\cdots-g_{i+2k})+\\
g_{i+2k-1}(g_{i}+g_{i+2}+g_{i+4}+\cdots+g_{i+2k-2}-g_{i+2k}),
\end{gather*}
gives the $i$-th equation of the system \eqref{eq:VS_sys}.
Notice that $U_{2k+2}=U_1$ and therefore if we write $L=\prod_{i=1}^nL_i$
we have that
\begin{align*}
\dot{L}&=\sum_{i=1}^{2k+1}L_1L_2\ldots\dot{L}_i\ldots L_{2k+1}\\
&=\sum_{i=1}^{2k+1}L_1L_2\ldots L_iU_{i+1}\ldots L_{2k+1}-
\sum_{i=1}^{2k+1}L_1L_2\ldots U_iL_i\ldots L_{2k+1}\\
&=LU_{2k+2}-U_1L=[L,U_1]\, ,
\end{align*}
which is a Lax equation for the system \eqref{eq:VS_sys}. The simple form of the matrices $L_i$ allows us to compute
easily the trace of $L$ and as it turns out the trace is equal to the Casimir $K_k^{b-\nu}$.
Indeed, if $R_i$ is the matrix $R_i=(\beta_i-\lambda)\begin{pmatrix}0 & 0 \\ 1 & 0 \end{pmatrix}$
then one can show that the matrices $L_i, R_i$ and $U_i$ satisfy the following relations:
\begin{gather*}
\prod_{i=j}^\ell(L_i-R_i)=\prod_{i=j}^{\ell-1} g_i\begin{pmatrix}f_\ell & 1 \\ f_jf_\ell & f_j \end{pmatrix},
\; R_iR_j=0, \\
\begin{pmatrix}f_\ell & 1 \\ f_jf_\ell & f_j \end{pmatrix}
\begin{pmatrix}0 & 0 \\ 1 & 0 \end{pmatrix}
\begin{pmatrix}f_{2k+1} & 1 \\ f_{\ell+2}f_{2k+1} & f_{\ell+2} \end{pmatrix}=
\begin{pmatrix}f_{2k+1} & 1 \\ f_jf_{2k+1} & f_j \end{pmatrix}.
\end{gather*}
From the previous relations we get a formula for the matrix $L$ in terms of the matrices
$L_i-R_i$, $i=1,2,\ldots,2k+1$. Since the product of any two matrices $R_i, R_j$
is zero it follows that the matrix $L=\prod_{i=1}^{2k+1}(L_i-R_i+R_i)$ is sum
of products of $L_i-R_i$ for $i=1,2,\ldots,2k+1$ with no consecutive gaps between them,
i.e. a sum of elements of the form
$$
(L_1-R_1)\ldots(L_{i-1}-R_{i-1})R_i(L_{i+1}-R_{i+1})\ldots(L_{j-1}-R_{j-1})R_j(L_{j+1}-R_{j+1})\ldots
$$
with $i<j-1<\ldots$. The above formulas allow us to write such products as a single matrix
and since the trace is linear we get that
\begin{equation}\label{eq:VS_form}
\operatorname{tr}(L)=
\prod_{i=1}^{2k+1}(1+(\beta_{i+1}-\lambda)\frac{\partial^2}{\partial g_i\partial g_{i+1}})\prod_{j=1}^{2k+1}g_i.
\end{equation}
The trace of $L$, written in the variables $x_i$, becomes
\begin{align*}
\operatorname{tr}(L)&=\prod_{i=1}^{2k+1}(1+(b_{\rho_{i},\rho_{i}+k}-\lambda)\frac{\partial^2}{\partial x_{\rho_i}\partial x_{\rho_{i+1}}})\prod_{j=1}^{2k+1}x_{\rho_i}\\
&=\prod_{i=1}^{2k+1}(1+(b_{i,i+k}-\lambda)\frac{\partial^2}{\partial x_i\partial x_{i+k}})\prod_{j=1}^{2k+1}x_i.
\end{align*}
Since L is a Lax operator, the coefficients of the trace of $L$ (viewed as a polynomial in $\lambda$) are first integrals of \eqref{eq:LV_ham}.
We will show that the previous formula for the trace of $L$ is exactly the polynomial $K_k^{b-\lambda}$
and we will also give a recursive formula for explicitly producing the polynomials $K_j^b$. To do this, we first define the
following constant coefficient linear operators
$$
  D_{i,j}=\frac{\partial^2}{\partial x_i\partial x_j}, \quad D=\sum_{i=1}^{2k+1}D_{i,i+k}=\sum_{i=1}^{2k+1}\frac{\partial^2}{\partial x_i\partial x_{i+k}}
$$
and
$$
D_b=\sum_{i=1}^{2k+1}b_{i,i+k}\frac{\partial^2}{\partial x_i\partial x_{i+k}}\,.
$$
Since they have constant coefficients, all these operators commute with each other (at least when applied on
polynomials, which is the case which we consider here). In the following lemma we show that the first
integrals $K_i$ of \eqref{eq:LV_ham} can be defined recursively using the operator $D$.

\begin{lem}
\label{lem:rec_int}
The polynomials $K_i$, defined by \eqref{eq:bogo_char_poly} (which are the same thing as $K_i=K_i^{\bf 0}$)
satisfy the following recursion relation
$$
  D K_i=(k-i+1) K_{i-1}.
$$
\end{lem}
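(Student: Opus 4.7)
The plan is to expand $DK_i$ combinatorially and exhibit a bijection. Since $K_i=\sum_{\um\in\cS_i}x_\um$ is a sum of square-free monomials, and $\partial_j\partial_{j+k}x_\um$ (indices taken modulo $2k+1$) yields $x_{\um\ominus\{j,j+k\}}$ when $\{j,j+k\}\subset\um$ and $0$ otherwise, the $2k+1$ summands of $D$ parametrize bijectively all unordered pairs $\{a,b\}\subset\cI$ with $b-a\in\{k,k+1\}$ (those with difference $k$ coming from $j\le k+1$, those with difference $k+1$ coming from $j>k+1$). Hence
$$DK_i=\sum_{\un\in\cI^{(2i-1)}}N_\un\,x_\un,\qquad N_\un=\#\{\{a,b\}\subset\un':b-a\in\{k,k+1\},\ \un\oplus\{a,b\}\in\cS_i\},$$
and the lemma reduces to the identity $N_\un=(k-i+1)\chi_{\cS_{i-1}}(\un)$.

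I would prove this via a bijection
$$\Phi\colon\{(\um,p):\um\in\cS_i,\ p\subset\um,\ b-a\in\{k,k+1\}\}\,\longleftrightarrow\,\{(\un,j):\un\in\cS_{i-1},\ 1\le j\le k-i+1\},$$
sending $(\um,p)$ to $(\um\ominus p,j)$, where $j$ is the unique index such that $p$ equals the $j$-th natural pair $\{r_j,s_j\}$ of $(\um\ominus p)'$ supplied by Proposition~\ref{prp:m'}; the inverse sends $(\un,j)$ to $(\un\oplus\{r_j,s_j\},\{r_j,s_j\})$. The inverse direction is immediate: removing the $j$-th natural pair from $\un'\in\cS_{i-1}'$ leaves the remaining pairs $(r_{j'},s_{j'})_{j'\ne j}$ intact and in the correct order, so $\un\oplus\{r_j,s_j\}\in\cS_i$ by Proposition~\ref{prp:m'}. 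The forward direction is the heart of the argument: for $\um\in\cS_i$ with $\um'=(r'_1,\ldots,r'_{k-i},s'_1,\ldots,s'_{k-i})$ and a pair $\{a,b\}\subset\um$ with $b-a\in\{k,k+1\}$, I must show that $\um'\cup\{a,b\}$ sorted satisfies the $\cS_{i-1}'$ condition with $\{a,b\}$ appearing as a natural pair.

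The key step is the \emph{balance identity} $\#\{j:r'_j<a\}=\#\{j:s'_j<b\}$, which follows from the stronger equivalence $r'_j<a\Longleftrightarrow s'_j<b$. That equivalence is a short calculation: if $r'_j<a$ and $d_j:=s'_j-r'_j\in\{k,k+1\}$, then $s'_j\le a-1+d_j\le a+k\le b$, where the borderline equality $s'_j=b$ is excluded by disjointness $\um\cap\um'=\emptyset$; the reverse direction is symmetric using $b-a\in\{k,k+1\}$. Let $\alpha$ be this common value. Inserting $\{a,b\}$ into sorted $\um'$ places $a$ at position $\alpha+1$ in the first half and $b$ at position $\alpha+1$ in the second half of $\un':=\um'\cup\{a,b\}$; the pairing structure of $\un'$ thus consists of $(r'_{j'},s'_{j'})$ for $j'\le\alpha$, then $(a,b)$, then $(r'_{j'-1},s'_{j'-1})$ for $j'>\alpha+1$, with every difference in $\{k,k+1\}$. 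This shows $\un\in\cS_{i-1}$ and that $\{a,b\}$ is the $(\alpha+1)$-th natural pair of $\un'$, confirming that $\Phi$ is well-defined. The counts $N_\un=k-i+1$ for $\un\in\cS_{i-1}$ and $N_\un=0$ otherwise follow immediately. The main obstacle, and really the only substantive step, is the balance identity; everything else is Proposition~\ref{prp:m'} bookkeeping.
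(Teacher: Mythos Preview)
Your proof is correct and takes a genuinely different route from the paper's. The paper does not prove the one-step recursion directly; instead it first establishes the global identity $D^{k-i}K_k=(k-i)!\,K_i$ by expanding $D^{k-i}$ with the multinomial formula, observing that any product of the $D_{j,j+k}$ in which two factors share a variable annihilates $K_k$ (since $K_k$ has degree one in each $x_j$), and then identifying the surviving terms with $\cS_i$ via Proposition~\ref{prp:m'}. The recursion $DK_i=(k-i+1)K_{i-1}$ then drops out by dividing two instances of that global identity. Your approach bypasses the multinomial expansion entirely and builds the bijection $\Phi$ at the single-step level; the balance identity $r'_j<a\Longleftrightarrow s'_j<b$ is the combinatorial heart, and it is correct (the borderline cases are excluded precisely by $\um\cap\um'=\emptyset$, as you note). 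What the paper's route buys is the stronger formula $D^{k-i}K_k=(k-i)!\,K_i$ in one stroke, which it reuses later; what your route buys is a self-contained argument that makes the factor $k-i+1$ visible as the number of natural pairs of $\un'$.

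One small point worth making explicit: when you say that inserting $\{a,b\}$ into $\um'$ places $a$ in the first half and $b$ in the second half, you are implicitly using that $a\leqs k+1\leqs s'_1$ and $r'_{k-i}\leqs k+1\leqs b$ (both forced by the difference condition), together with disjointness to rule out equalities. This is true and easy, but a reader may pause there; a one-line remark would close it.
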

\begin{proof}
Each element $\um'\in\cS_i'$ (and therefore each element $\um\in\cS_i$) is in one to one correspondence
with the $i'$-tuples consisting of elements $b_{\ell,\ell+k}, \ell=1,2,\ldots,n$ which do not have
any common indices (e.g. the choice $b_{1,5}, b_{5,9}, \ldots$ is not allowed).
This is obvious from Proposition \ref{prp:m'}. Therefore there is a bijection $\Phi:\cS_i\rightarrow \cB_{i'}$
where
%$$
%\cB_{i'}=\set{\set{b_{r_1,r_1+k}, b_{r_2,r_2+k}, \ldots, b_{r_{i'},r_{i'}+k}}\mid
%\#\set{r_1,r_2,\ldots,r_{i'},r_{1}+k,r_2+k,\ldots,r_{i'}+k}=2i'}.
%$$
$$
\cB_{i'}=\set{\set{b_{r_1,r_1+k}, \ldots, b_{r_{i'},r_{i'}+k}}\mid
\#\set{r_1,\ldots,r_{i'},r_{1}+k,\ldots,r_{i'}+k}=2i'}.
$$
We can easily see that the map $\Phi$ is the complement, meaning that
\begin{gather*}
\Phi(\um)=\set{b_{r_1,r_1+k}, b_{r_2,r_2+k}, \ldots, b_{r_{i'},r_{i'}+k}}\iff\\
\um'=(r_1,r_2,\ldots,r_{i'},r_{1}+k,r_2+k,\ldots,r_{i'}+k).
\end{gather*}
Let $\um=(m_1,m_2,\ldots,m_{2i+1})$ be an element of the set $\cS_i$ such that
$\Phi(\um)=\set{b_{r_1,r_1+k}, b_{r_2,r_2+k}, \ldots, b_{r_{i'},r_{i'}+k}}$.
Then
$
x_{\um}=
%\left(\prod_{\ell=1}^{i'}D_{r_\ell,r_\ell+k}\right) (x_1x_2\cdots x_n)=
\left(\prod_{\ell=1}^{i'}D_{r_\ell,r_\ell+k}\right)K_k
$
and, using the combinatorial formulas \eqref{eq:defo_cas} we get
$$
K_i=\sum_{\set{b_{r_1,r_1+k}, b_{r_2,r_2+k}, \ldots, b_{r_{i'},r_{i'}+k}}\in\cB_{i'}}
\left(\prod_{\ell=1}^{i'}D_{r_\ell,r_\ell+k}\right)K_k.
$$
Because $\deg_{x_i}K_k=1$, it follows that $\frac{\partial^2}{\partial x_j^2}(K_k)=0$ for any $j\in\cI$.
The multinomial formula
$$
D^{i'}=\left(\sum_{j=1}^nD_{j,j+k}\right)^{i'}=\sum_{m_1+m_2+\ldots,m_k=i'}{i'\choose m_1,m_2,\ldots,m_\ell}
\prod_{j=1}^\ell D_{j,j+k}^{m_j}
$$
gives that
\begin{gather*}
D^{i'}K_k=\sum_{\set{b_{r_1,r_1+k}, b_{r_2,r_2+k}, \ldots, b_{r_{i'},r_{i'}+k}}\in\cB_{i'}}
{i'\choose 1,1,\ldots,1}\left(\prod_{j=1}^{i'} D_{r_j,r_j+k}\right)K_k=i'!K_i
\end{gather*}
and we deduce that
\begin{equation}
\label{eq:rec_K_k}
D^{i'}K_k=i'!K_i
\end{equation}
Therefore,
$$
K_{i-1}=\frac{1}{(i'+1)!}D^{i'+1}K_k=\frac{i'!}{(i'+1)!}DK_i=\frac{1}{i'+1}DK_i.
$$
\end{proof}

We prove now a similar formula for the polynomials $K_i^b$
which shows that the formula \eqref{eq:VS_form} of Veselov and Shabat
for the trace of the Lax operator $L$ coincides with the polynomial
$K_k^{b-\lambda}$ (item (1) of the next proposition) which is the
characteristic polynomial of the Lax operator we constructed in
\eqref{eq:bogo_deformed_lax}.
\begin{prp}\ %
\begin{enumerate}
\item[(1)] For each $i=0,1,2,\ldots,k$
$$
K_i^b=e^{D_b}(K_i)=K_i+D_bK_i+\frac{1}{2!}D_b^2K_i+\cdots+\frac{1}{i!}D_b^iK_i
$$
\item[(2)] For each $i=0,1,2,\ldots,k-1$
$$
K_i^b=\frac{1}{k-i}D K^b_{i+1}.
$$
\end{enumerate}
\end{prp}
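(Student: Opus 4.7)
The plan is to prove (1) first in the special case $i = k$ using the combinatorial formula \eqref{eq:defo_cas}, then bootstrap to general $i$ via the characteristic polynomial expansion of Proposition \ref{prp:char_Lax}, and finally deduce (2) by applying $D$ and invoking Lemma \ref{lem:rec_int}.

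\emph{Base case $K_k^b = e^{D_b} K_k$.} Since $K_k = x_1 x_2 \cdots x_{2k+1}$ is multilinear, each operator $\partial_i \partial_{i+k}$ simply deletes the factors $x_i$ and $x_{i+k}$. The $s!$ orderings of $s$ disjoint edges in the expansion of $D_b^s$ collapse into a single summand, yielding
$$\frac{1}{s!}\, D_b^s K_k = \sum_{M} \Bigl(\prod_{e \in M} b_e\Bigr) \prod_{m \notin V(M)} x_m \, ,$$
where $M$ ranges over size-$s$ matchings in the graph whose edges are pairs at (circular) distance $k$ or $k+1$. By Proposition \ref{prp:m'}, the complement $\cI \setminus V(M)$ is exactly an element $\um \in \cS_{k-s}$, and a sign check shows that the twist $b' = \pm b$ is precisely cancelled by the reordering forced when $i+k$ wraps past $2k+1$ in $D_b$, so that $\prod_{e \in M} b_e = b'_\um$. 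Summing over $s$ reproduces \eqref{eq:defo_cas}.

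\emph{Extension to general $i$.} Replacing each $b_{j+k,j}$ by $b_{j+k,j} - \l\mu$ sends, by skew-symmetry, each coefficient $b_{i,i+k}$ of $D_b$ to $b_{i,i+k} + \l\mu$, whence $D_{b - \l\mu\bfone} = D_b + \l\mu D$. Since $D$ and $D_b$ are commuting constant-coefficient operators, the base case gives
$$K_k^{b - \l\mu\bfone} = e^{D_b + \l\mu D} K_k = e^{D_b}\, e^{\l\mu D} K_k = \sum_{\ell=0}^k (\l\mu)^{k-\ell}\, e^{D_b} K_\ell \, ,$$
using Lemma \ref{lem:rec_int} iterated as $D^s K_k = s!\, K_{k-s}$. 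Comparing with $K_k^{b-\l\mu\bfone} = \sum_\ell (\l\mu)^{k-\ell} K_\ell^b$ from Proposition \ref{prp:char_Lax} and matching powers of $\l\mu$ gives $K_\ell^b = e^{D_b} K_\ell$. The truncation at $j = i$ in the statement is automatic, since every monomial in $K_i$ is a product of $2i+1$ distinct variables, so at most $i$ disjoint pairs can be removed.

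\emph{Deducing (2).} By the same commutativity and Lemma \ref{lem:rec_int},
$$D K_{i+1}^b = D\, e^{D_b} K_{i+1} = e^{D_b}\, D K_{i+1} = (k-i)\, e^{D_b} K_i = (k-i) K_i^b \, ,$$
and (2) follows after dividing by $k-i$. The only delicate step in the entire argument is the sign/index bookkeeping in the base case, reconciling the definition of $D_b$ in terms of $b_{i,i+k}$ (indices mod $2k+1$) with the coefficients $b'_\um$ of Proposition \ref{prop:deformed_casimir}; once that compatibility is established, the rest is a formal manipulation of commuting constant-coefficient operators.
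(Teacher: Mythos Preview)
Your proof is correct and follows essentially the same route as the paper's: establish the base case $K_k^b=e^{D_b}K_k$ combinatorially, use the substitution $b\mapsto b-\l\mu\bfone$ together with $D_{b-\l\mu\bfone}=D_b+\l\mu D$ and the identity $D^sK_k=s!\,K_{k-s}$ to extract the general case by matching coefficients, then deduce (2) from (1) and Lemma~\ref{lem:rec_int}. The only cosmetic difference is that you factor the exponential $e^{D_b+\l\mu D}=e^{D_b}e^{\l\mu D}$ directly, whereas the paper binomially expands each power $(D_b+\l D)^\ell$ and reassembles; your packaging is a little cleaner but the content is identical.
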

\begin{proof}
First of all, note that (1) holds for $i=k$. This can be seen from the proof of
Lemma \ref{lem:rec_int} and the definition of the polynomial $K_k^b$; in the polynomial
$K_k^b$, for each variable missing from $x_{\um}$ there is one and only one constant
$b_{.,.}$ associated to it
(see also Proposition \ref{prp:m'}).

We also notice that since
    $D_{b+\l{\bf 1}}=D_b+\l D$, its $\ell$-th power (for $\ell=1,\dots,k$) is given by
%We also notice that the operator $D_b$ satisfies the following formula
$$
D_{b+\l{\bf 1}}^\ell=\sum_{m=0}^\ell{\ell \choose m}\l^mD_b^{\ell-m}D^m.
$$
Therefore, for item (1) we have
$$
K_k^{b+\l{\bf 1}}=\sum_{\ell=0}^k\frac{1}{\ell!}D_{b+\l{\bf 1}}^\ell K_k=
\sum_{\ell=0}^k\frac{1}{\ell!}\sum_{m=0}^\ell{\ell \choose m}\l^mD_b^{\ell-m}D^mK_k.
$$
Since $K_i^b$ is the coefficient of $\l^{k-i}$ in the expansion of
$K_k^{b+\l{\bf 1}}$ it follows that
$$
K_i^b=\sum_{\ell=0}^k\frac{1}{\ell!}{\ell \choose k-i}D_b^{\ell-k+i}D^{k-i}K_k=
\sum_{\ell=k-i}^k\frac{1}{\ell!}{\ell \choose k-i}D_b^{\ell-k+i}D^{k-i}K_k.
$$
In view of the formula (\ref{eq:rec_K_k}), the previous line can be re-written as
$$
K_i^b=\sum_{\ell=k-i}^k\frac{1}{\ell!}{\ell \choose k-i}(k-i)!D_b^{\ell-k+i}K_i
$$
which is exactly item (1).

Item (2) can be deduced by combining item (1) and Lemma \ref{lem:rec_int}.
\end{proof}

%%%%%%%%%%%%%%%%%%%%%%%%%%%%%%%%%%%%%%%%%%%%%%%%%%%%%%%%%%%%%%%%%%%%%%%%%%%%%%%%
%%%%%%%%%%%%%%%%%%%%%%%%%%%%%%%%%%%%%%%%%%%%%%%%%%%%%%%%%%%%%%%%%%%%%%%%%%%%%%%%
%%%%%%%%%%%%%%%%%%%%%%%%%%%%%%%%%%%%%%%%%%%%%%%%%%%%%%%%%%%%%%%%%%%%%%%%%%%%%%%%

\section*{Acknowledgments}
The work of C.\,A.\,Evripidou was partially supported by a postdoctoral fellowship of the University of Cyprus
and by the Australian Research Council.

\bibliographystyle{abbrv}

\begin{thebibliography}{10}

\bibitem{adlermoerbekevanhaecke2004}
M.~Adler, P.~van Moerbeke, and P.~Vanhaecke.
\newblock {\em Algebraic integrability, {P}ainlev\'e geometry and {L}ie
  algebras}, volume~47 of {\em Ergebnisse der Mathematik und ihrer
  Grenzgebiete. 3. Folge. A Series of Modern Surveys in Mathematics [Results in
  Mathematics and Related Areas. 3rd Series. A Series of Modern Surveys in
  Mathematics]}.
\newblock Springer-Verlag, Berlin, 2004.

\bibitem{Bog1}
O.~I. Bogoyavlenski{\u\i}.
\newblock Some constructions of integrable dynamical systems.
\newblock {\em Izv. Akad. Nauk SSSR Ser. Mat.}, 51(4):737--766, 910, 1987.

\bibitem{Bog2}
O.~I. Bogoyavlenskij.
\newblock Integrable {L}otka-{V}olterra systems.
\newblock {\em Regul. Chaotic Dyn.}, 13(6):543--556, 2008.

\bibitem{damianou}
P.~A. Damianou.
\newblock Lotka-Volterra systems associated with graphs.
\newblock In {\em Group analysis of differential equations and integrable
  systems}, pages 30--44. Department of Mathematics and Statistics, University
  of Cyprus, Nicosia, 2012.

\bibitem{PPPP}
P.~A. Damianou, C.~A. Evripidou, P.~Kassotakis, and P.~Vanhaecke.
\newblock Integrable reductions of the {B}ogoyavlenskij-{I}toh
  {L}otka-{V}olterra systems.
\newblock {\em J. Math. Phys.}, 58(3):032704, 17, 2017.

\bibitem{itoh1}
Y.~Itoh.
\newblock Integrals of a {L}otka-{V}olterra system of odd number of variables.
\newblock {\em Progr. Theoret. Phys.}, 78(3):507--510, 1987.

\bibitem{itoh2}
Y.~Itoh.
\newblock A combinatorial method for the vanishing of the {P}oisson brackets of
  an integrable {L}otka-{V}olterra system.
\newblock {\em J. Phys. A}, 42(2):025201, 11, 2009.

\bibitem{PLV}
C.~Laurent-Gengoux, A.~Pichereau, and P.~Vanhaecke.
\newblock {\em Poisson structures}, volume 347 of {\em Grundlehren der
  Mathematischen Wissenschaften [Fundamental Principles of Mathematical
  Sciences]}.
\newblock Springer, Heidelberg, 2013.

\bibitem{Lotka}
A.~J. Lotka.
\newblock {\em Analytical theory of biological populations}.
\newblock The Plenum Series on Demographic Methods and Population Analysis.
  Plenum Press, New York, 1998.
\newblock Translated from the 1939 French edition and with an introduction by
  David P. Smith and H{\'e}l{\`e}ne Rossert.

\bibitem{magri}
F.~Magri.
\newblock A simple model of the integrable {H}amiltonian equation.
\newblock {\em J. Math. Phys.}, 19(5):1156--1162, 1978.

\bibitem{NY}
M.~Noumi and Y.~Yamada.
\newblock Affine {W}eyl groups, discrete dynamical systems and {P}ainlev\'e
  equations.
\newblock {\em Comm. Math. Phys.}, 199(2):281--295, 1998.

\bibitem{KKQTV}
P.~H. van~der Kamp, T.~E. Kouloukas, G.~R.~W. Quispel, D.~T. Tran, and
  P.~Vanhaecke.
\newblock Integrable and superintegrable systems associated with multi-sums of
  products.
\newblock {\em Proc. R. Soc. Lond. Ser. A Math. Phys. Eng. Sci.},
  470(2172):20140481, 23, 2014.

\bibitem{VS}
A.~P. Veselov and A.~B. Shabat.
\newblock A dressing chain and the spectral theory of the {S}chr\"odinger
  operator.
\newblock {\em Funktsional. Anal. i Prilozhen.}, 27(2):1--21, 96, 1993.

\bibitem{Volterra}
V.~Volterra.
\newblock {\em Le\c cons sur la th\'eorie math\'ematique de la lutte pour la
  vie}.
\newblock Les Grands Classiques Gauthier-Villars. [Gauthier-Villars Great
  Classics]. \'Editions Jacques Gabay, Sceaux, 1990.
\newblock Reprint of the 1931 original.

\end{thebibliography}

\end{document}